\documentclass[regno,12pt]{amsart}
\usepackage{amsmath,amsthm,amssymb,a4,graphics,color}
\usepackage[active]{srcltx}
\usepackage[usenames,dvipsnames]{pstricks}
\usepackage{epsfig}
\usepackage{pst-grad}
\usepackage{pst-plot} 
\usepackage{tikz}
\usepackage{hyperref}
\usepackage{dsfont}
\usepackage{float}


\newfam\Bbbfam 
\font\tenBbb=msbm10 
\font\sevenBbb=msbm7 
\font\fiveBbb=msbm5 
\textfont\Bbbfam=\tenBbb 
\scriptfont\Bbbfam=\sevenBbb 
\scriptscriptfont\Bbbfam=\fiveBbb

\setlength{\textheight}{8.4in} 
\setlength{\textwidth}{6.6in} 
\setlength{\topmargin}{0in} 
\setlength{\headheight}{0.12in} 
\setlength{\headsep}{.40in} 
\setlength{\parindent}{1pc} 
\setlength{\oddsidemargin}{-0.1in} 
\setlength{\evensidemargin}{-0.1in} 

\marginparwidth 40pt 
\marginparsep 0pt 
\oddsidemargin -5mm 
\topmargin -30pt 
\headheight 12pt 
\headsep 15pt 
\footskip 15pt 
\textheight 670pt 
\textwidth 170mm 
\columnsep 10pt 
\columnseprule 0pt 
\sloppy 
\parskip 0.8ex plus0.3ex minus0.2ex 
\parindent1.0em 

 
\newtheorem{theorem}{Theorem}[section] 
\newtheorem{lemma}[theorem]{Lemma}

\theoremstyle{definition}


\def\1{{\mathchoice {1\mskip-4mu\mathrm l}      
{1\mskip-4mu\mathrm l} 
{1\mskip-4.5mu\mathrm l} {1\mskip-5mu\mathrm l}}}

\renewcommand{\qed}{\hfill\ensuremath{\square}}

\renewcommand{\d}{{\rm d}}

\newcommand{\e}   {{\operatorname e }}

\numberwithin{equation}{section}

\begin{document}
\title{Dormancy in Random Environment: Symmetric Exclusion}
\author[Helia Shafigh]{}
\maketitle
\thispagestyle{empty}
\vspace{-0.5cm}

\centerline{\sc 
Helia Shafigh\footnote{WIAS Berlin, Mohrenstra{\ss}e 39, 10117 Berlin, Germany, {\tt shafigh@wias-berlin.de}}}
\renewcommand{\thefootnote}{}
\vspace{0.5cm}
\centerline{\textit{WIAS Berlin}}

\bigskip


\begin{abstract}
In this paper, we study a spatial model for dormancy in random environment via a two-type branching random walk in continuous-time, where individuals can switch between dormant and active states through spontaneous switching independent of the random environment. However, the branching mechanism is governed by a random environment which dictates the branching rates, namely the simple symmetric exclusion process. We will interpret the presence of the exclusion particles either as \emph{catalysts}, accelerating the branching mechanism, or as \emph{traps}, aiming to kill the individuals. The difference between active and dormant individuals is defined in such a way that dormant individuals are protected from being trapped, but do not participate in migration or branching.

We quantify the influence of dormancy on the growth resp.\ survival of the population by identifying the large-time asymptotics of the expected population size. The starting point for our mathematical considerations and proofs is the parabolic Anderson model via the Feynman-Kac formula. In particular, the quantitative investigation of the role of dormancy is done by extending the Parabolic Anderson model to a two-type random walk.  
 \end{abstract}


\medskip\noindent
{\it Keywords and phrases.} Parabolic Anderson model, dormancy, populations with seed-bank, branching random walk, Lyapunov exponents, Rayleigh-Ritz formula, switching diffusions, Feynman-Kac formula, large deviations for two-state Markov chains, simple symmetric exclusion process

\section{Introduction and main results}
\subsection{Biological Motivation} Dormancy is an evolutionary trait that has developed independently across various life forms and is particularly common in microbial communities. To give a definition, we follow \cite{blath} and refer to dormancy as the \emph{ability of individuals to enter a reversible state of minimal metabolic activity}. The collection of all dormant individuals within a population is also often called a \emph{seed-bank}. Maintaining a seed-bank leads to a decline in the reproduction rate, but it also reduces the need for resources, making dormancy a viable strategy during unfavourable periods. Initially studied in plants as a survival strategy (cf.\,\cite{cohen}), dormancy is now recognized as a prevalent trait in microbial communities with significant evolutionary, ecological, and pathogenic implications, serving as an efficient strategy to survive challenging environmental conditions, competitive pressure, or antibiotic treatment. However, it is at the same time a costly trait whose maintenance requires energy and
a sophisticated mechanisms for switching between active and dormant states. Moreover, the increased survival rate of dormant individuals must be weighed against their low reproductive activity. Despite its costs, dormancy still seems to provide advantages in variable environments. For a recent overview on biological dormancy and seed-banks we refer to \cite{dormancy}.

The existing stochastic models for dormancy can be roughly categorized into two approaches: population genetic models and population dynamic models. While the first approach assumes a constant population size and focusses on the genealogical implications of seed-banks, the latter is typically concerned with individual-based modelling through the theory of branching processes. Following a brief example in the book \cite{bookhaccou}, a two-type branching process (without migration) in a fluctuating random environment has been introduced in \cite{blath}, which served as a motivation for this paper. In \cite{blath}, the authors consider three different switching strategies between the two types (dormant and active), namely the \emph{stochastic (or: spontaneous; simultaneous) switching}, \emph{responsive switching} and \emph{anticipatory switching}. In the latter two strategies, individuals adapt to the fluctuating environment by selecting their state (dormant or active) based on environmental conditions via e.g.\,an increased reproduction activity during beneficial phases and a more extensive seed-bank during unfavourable ones in the responsive strategy resp.\,vice versa in the anticipatory strategy. In contrast, the stochastic switching strategy, which remains unaffected by environmental changes, proves especially advantageous during catastrophic events, as it, with high probability, ensures the existence of dormant individuals, which may contribute to the survival of the whole population, when a severely adverse environment might eradicate all active ones. As an example, it is estimated that more than $80\%$ of soil bacteria are
metabolically inactive at any given time, forming extensive seed-banks of dormant individuals independent of the current conditions (cf. \cite{jl11} and \cite{ls18}). This makes the understanding of the stochastic switching strategy an interesting and important task. 

\subsection{Modelling Approach and Goals}  
The aim of this paper is to investigate the stochastic switching strategy in order to quantitatively compare the long-term behaviour of populations with and without this dormancy mechanism, when the underlying environment is random and given by a \emph{simple symmetric exclusion process}, which will be rigorously defined later. 

Inspired by the Galton-Watson process with dormancy introduced in \cite{blath}, a spatial model for dormancy in random environment has been recently introduced in \cite{shafigh}, in which the effect of dormancy on the population growth resp. survival of a population on $\mathbb{Z}^d$ is quantified by identifying the large-time asymptotics of the expected population size. The random environment, which drives the population dynamics, was modelled through three different particle systems: \emph{Bernoulli field of immobile particles}, \emph{one moving particle}, and a \emph{Poisson field of moving particles}. Thus, extending this framework to another classical particle system, such as the \emph{simple symmetric exclusion process}, seems to be a natural step. Moreover, as will be discussed later, the simple exclusion exhibits an interesting \emph{clumping properties} in the lower dimensions $d\leq 2$. Specifically, a finite region occupied by particles takes longer to empty, and conversely, a vacant region takes longer to become occupied. This slower dynamic in lower dimensions makes the exclusion process a particularly suitable choice for modelling the environment, as real-world environments (e.g.\,seasonal changes) often evolve gradually rather than abruptly.

To the best of our knowledge, other spatial models for dormancy in random environment in the setting of \emph{population size} models are still missing. 
 
\subsection{Description of the Model}
In our model, the population lives on $\mathbb{Z}^d$ and consists of two different types $i\in\{0,1\}$ of particles, where we refer to $0$ as \emph{dormant} and to $1$ as \emph{active}. Let $\eta(x,i,t)$ be the number of particles in spatial point $x\in\mathbb{Z}^d$ and state $i$ at time $t\geq 0$, which shall evolve in time according to the following rules:
\begin{itemize}
\item at time $t=0$, there is only one active particle in $0\in\mathbb{Z}^d$ and all other sites are vacant;
\item all particles act independently of each other;
\item active particles become dormant at rate $s_1\geq 0$ and dormant particles become active at rate $s_0\geq 0$;
\item active particles split into two at rate $\xi^+(x,t)\geq 0$ and die at rate $\xi^-(x,t)\geq 0$, depending on their spatial location $x$ and on time $t$, where both $\xi^+$ and $\xi^-$ are random fields;
\item active particles jump to one of the neighbour sites with equal rate $\kappa\geq 0$;
\item dormant particles do not participate in branching, dying or migration.
\begin{figure}[H]
\centering
\includegraphics[scale=0.8]{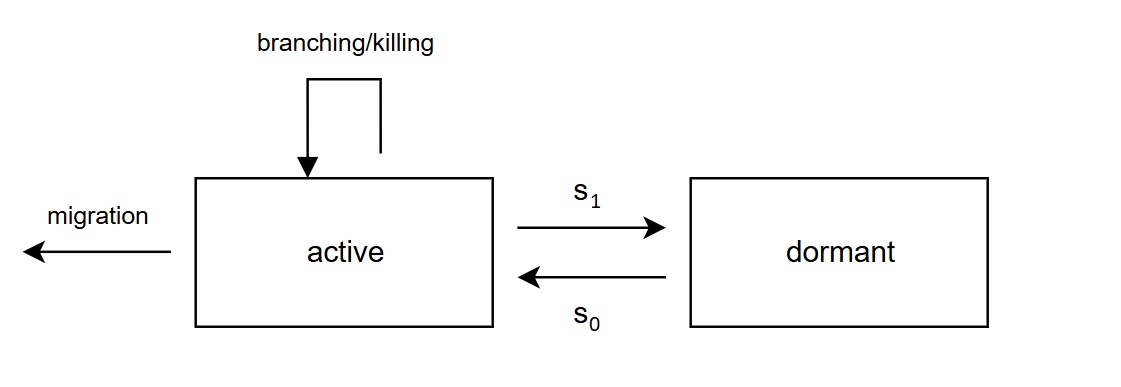}\caption{The evolution in every single point. Active individuals are subject to migration, branching and switching to dormant. Dormant individuals can only get active.}
\end{figure}
\end{itemize}
By assumption, the initial condition is given by $\eta(x,i,0)=\delta_{(0,1)}(x,i)$. Let us define $\eta(t):=\left\{\eta(x,i,t)\mid (x,i)\in\mathbb{Z}^d\right\}$ as configurations on $\mathbb{N}^{\mathbb{Z}^d\times\{0,1\}}$, representing the number of particles in each point $x\in\mathbb{Z}^d$ and state $i\in\{0,1\}$ at time $t$. Then $\eta=(\eta(t))_{t\geq 0}$ is a Markov process on $\mathbb{N}^{\mathbb{Z}^d\times\{0,1\}}$. However, as we will see later, we will use other methods throughout the paper to describe our population, such that a further formalization of $\eta$ shall not be necessary. In the following, we abbreviate $\xi(x,t):=\xi^+(x,t)-\xi^-(x,t)$ for the \emph{balance} between branching and dying and refer to $\xi$ as the underlying \emph{random environment}. In the following, if we fix a realization of $\xi$, then we will denote by
\begin{align}\label{udef}
u(x,i,t):=u_{\xi}(x,i,t):=\mathbb{E}[\eta(x,i,t)\mid \xi]
\end{align}
the expected number of particles in $x\in\mathbb{Z}^d$ and state $i\in\{0,1\}$ at time $t$ with initial condition 
\begin{align*}
u(x,i,0)=\delta_{(0,1)}(x,i),
\end{align*}
where the expectation is only taken over switching, branching and dying (i.\,e.\, over the evolution of $\eta$ for fixed $\xi$) and not over the random environment $\xi$. If we average over $\xi$ as well, what we will denote in the following by $\left<\cdot\right>$, then we refer to
\begin{align*}
\left<u(x,i,t)\right>
\end{align*}
as the \emph{annealed} number of particles in $x\in\mathbb{Z}^d$ and in state $i\in\{0,1\}$ at time $t$. 

\subsection{Simple Exclusion as Random Environment}
As the underlying random environment, we are going to consider the \emph{simple symmetric exclusion process} $\xi=(\xi(x,t))_{x\in\mathbb{Z}^d,t\geq 0}$, a Markov process on $\{0,1\}^{\mathbb{Z}^d}$ with generator (cf\,\cite{liggett})
\begin{align}\label{SE}
L_{\text{SE}}f(\eta)=\rho\sum_{\substack{x,y\in\mathbb{Z}^d\\x\sim y}}\eta(x)(1-\eta(y))(f(\eta^{x,y})-f(\eta))=\rho\sum_{\substack{x,y\in\mathbb{Z}^d\\x\sim y}}(f(\eta^{x,y})-f(\eta)),
\end{align}
for suitable test functions $f:\{0,1\}^{\mathbb{Z}^d}\to\mathbb{R}$, where
\begin{align*}
\eta^{x,y}(z)=\left\{\begin{array}{ll}
\eta(z), &z\neq x,y,\\\eta(y), &z=x,\\\eta(x), &z=y.
\end{array}\right.
\end{align*}
In words, if there is a particle located at site $x\in\mathbb{Z}^d$, it attempts to jump to a neighbouring site $y\in\mathbb{Z}^d$ with rate $\rho$, where the attempt is only successful if the site $y$ is vacant. Reformulating this mechanism, and as we can see from the right hand-side of \eqref{SE}, we can say that two neighbouring sites $x$ and $y$ change states (vacant or occupied by a particles) at rate $\rho$. 
In the following, we will always assume that $\xi$ starts under a Bernoulli product measure $\nu_p:=\nu$ with density $p\in(0,1)$, which is known to be an equilibrium measure for the exclusion dynamics. The graphical representation shows that the evolution is invariant under time reversal and the equilibrium measure $\nu$ is reversible.

Note, that $\xi$ is a non-negative number, which results always in an positive balance between branching and killing. To allow for negative rates as well, we multiply $\xi$ with some factor $\gamma\in[-\infty,\infty)$ and will consider $\gamma\xi$ as the underlying random environment. Thus, the exclusion process can be either interpreted as a field of \emph{traps}, which corresponds to $\gamma<0$, or \emph{catalysts}, if $\gamma>0$. In the first case, active individuals will die with rate $|\gamma|$ if they encounter one of the traps, whereas they branch into two with rate $\gamma$ in the presence of catalysts in the latter case.   

\subsection{Results}
Recall the number of particles $u(x,i,t)$ in point $x\in\mathbb{Z}^d$ and state $i\in\{0,1\}$ at time $t$, as defined in \eqref{udef}. The quantity we are interested in at most in the current paper is the \emph{annealed expected number of all particles}
\begin{align}\label{grossu}
\left<U(t)\right>:=\sum_{x\in\mathbb{Z}^d}\sum_{i\in\{0,1\}}\left<u(x,i,t)\right>,
\end{align}
which turns into the \emph{annealed survival probability} up to time $t$ for $\gamma<0$. Our results concern the large-time asymptotics of $\left<U(t)\right>$ in case of both positive and negative $\gamma$. However, our first result is related to the survival probability in case of no dormancy, when the random walk $X$ is active the whole time, i.e.\,$s_1=0$:
\begin{theorem}[Survival probability without dormancy]
Let $s_1=0$. Then, for all $\gamma\in(-\infty, 0)$, the annealed survival probability $\left<U(t)\right>$ converges to zero as $t\to\infty$ and satisfies the asymptotics
\begin{align}\label{survwithout}
\log\left<U(t)\right>=\left\{\begin{array}{ll}\displaystyle-4K_{p,1}\sqrt{\frac{\rho}{\pi}}\sqrt{t}(1+o(1)), &d=1,\\[13pt]\displaystyle-4K_{p,2}\rho\pi\frac{t}{\log(t)}(1+o(1)), &d=2,\\[13pt]\displaystyle-\lambda_{d,\gamma,\rho,p,\kappa}t(1+o(1)), &d\geq 3,\end{array}\right.
\end{align}
as $t\to\infty$, for some constants $K_{p,1},K_{p,2}\in[p, \log(1-p)]$ and 
\begin{align*}
\lambda_{d,\gamma,\rho, p,\kappa}\geq \frac{p\rho}{\frac{\rho}{|\gamma|}+G_d(0)},
\end{align*}
where $G_d(0)$ denotes the Green's function of a simple symmetric random walk with generator $\Delta$. 
\end{theorem}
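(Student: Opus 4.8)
\medskip\noindent\textit{Proof idea.}
The plan is to exploit that the hypothesis $s_1=0$ removes the dormant state entirely: an active particle never switches, so $\eta(x,0,t)\equiv0$ and the system is an ordinary branching random walk in the environment $\gamma\xi$. Hence $u(x,t):=u_\xi(x,1,t)$ solves the parabolic Anderson equation $\partial_t u=\kappa\Delta u+\gamma\xi u$ with $u(\cdot,0)=\delta_0$, and the Feynman--Kac formula, after summation over $x\in\Z^d$ and using reversibility of the exclusion process (so that the time reversal of $\xi$ seen along the path is irrelevant), gives
\begin{align*}
\left<U(t)\right>=\left<\,\E_0\!\left[\exp\!\left(-|\gamma|\int_0^t\xi(X_s,s)\,\d s\right)\right]\right>,
\end{align*}
where $\E_0$ is expectation over the continuous-time simple random walk $X$ with generator $\kappa\Delta$ started at $0$ and $\left<\cdot\right>$ averages $\xi$ started from $\nu_p$. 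Since $\gamma<0$ this is a decreasing functional of the occupation field, so everything reduces to estimating how cheaply the walk $X$ and the exclusion environment can cooperate so that $X$ stays off the occupied sites up to time $t$.

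For $d\ge3$ I would first show that $\lambda_{d,\gamma,\rho,p,\kappa}:=-\lim_{t\to\infty}t^{-1}\log\left<U(t)\right>$ exists. Conditioning $(X,\xi)$ at a deterministic intermediate time, using that $\xi(\cdot,t)\sim\nu_p$ again, and decoupling the two time blocks at an $e^{o(t)}$ cost by a restriction/localisation argument (here the attractiveness and stationarity of the exclusion process are used) yields near-submultiplicativity $\left<U(t+t')\right>\le e^{o(t+t')}\left<U(t)\right>\left<U(t')\right>$, which, together with the trivial bound $\left<U(t)\right>\ge e^{-|\gamma|t}$ coming from $\xi\le1$, gives a finite limit. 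For the lower bound $\lambda\ge p\rho/(\rho/|\gamma|+G_d(0))$, i.e.\ an upper bound on $\left<U(t)\right>$, I would compare the exclusion environment with a Poisson system of independent random-walk traps of intensity $p$ and jump rate $\rho$, which is more favourable to survival because exclusion spreads its particles out more evenly; $\left<U(t)\right>$ is then at most its independent-traps analogue, whose annealed rate can be computed from the two-body collision problem (the difference walk) and equals $p\rho/(\rho/|\gamma|+G_d(0))$, the finite Green's function $G_d(0)$ entering precisely because $X$ may revisit an occupied site. A direct Rayleigh--Ritz lower bound on the exponential rate, exploiting stationarity and the strong mixing of $\xi$ in $d\ge3$, is an alternative route.

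For $d\le2$ the slow ``clumping'' of the exclusion process is responsible for the merely sub-exponential decay. For the lower bound on $\left<U(t)\right>$ I would use the strategy in which the environment keeps a box of slowly growing radius $\ell_t\to\infty$ empty throughout $[0,t]$ while $X$ is confined to it: the confinement costs only $\exp(-O(\kappa t/\ell_t^2))$, i.e.\ $e^{o(\sqrt t)}$ for $d=1$ and $e^{o(t/\log t)}$ for $d=2$, so the cost is dominated by that of emptying the box. Creating the required depletion buffer --- of diffusive size of order $\sqrt{\rho t}$ in $d=1$, with the appropriate logarithmic correction in $d=2$ --- has probability at least $\exp(-4\sqrt{\rho/\pi}\,\sqrt t\,\log\frac{1}{1-p}(1+o(1)))$ resp.\ its $d=2$ analogue by a crude ``force the relevant sites to be vacant'' bound, which yields the lower bound on $\left<U(t)\right>$ with prefactor $\log\frac{1}{1-p}$. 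The matching upper bound on $\left<U(t)\right>$ is where the real work lies: one must show that \emph{every} behaviour of the walk pays at least this much, i.e.\ control, uniformly over the random space-time region explored by $X$, the probability that the exclusion process keeps that region essentially vacant. I would coarse-grain $[0,t]$ into blocks on a scale chosen so that on each block the occupation time $\int\xi(X_s,s)\,\d s$ is comparable to its mean unless the environment empties the current range of $X$, bound the probability of such an emptying from above --- the expected number of particle visits to the region producing a prefactor of order $p$ --- and optimise over the block scale; together the two bounds sandwich $K_{p,d}$ into $[p,\log\frac{1}{1-p}]$. I expect the sharp emptying estimate for the symmetric exclusion process in $d\le2$, with its precise $\rho$- and $p$-dependence --- exactly the clumping effect highlighted in the introduction --- to be the main obstacle; for it I would adapt known hitting and emptying probability bounds for the exclusion process.
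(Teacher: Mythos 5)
Your reduction to the one-particle Feynman--Kac formula, your lower bound in $d\le2$ (confine $X$ to a slowly growing box and force the environment to keep it vacant, with the cost governed by the expected range $\mathbb{E}_0[R_t]\sim4\sqrt{\rho t/\pi}$ of a rate-$2d\rho$ walk, giving the constant $\log\frac1{1-p}$), and your comparison with independent random-walk traps in $d\ge3$ all coincide with the paper's argument. The paper's lower bound is implemented via the graphical representation and Jensen's inequality, $\mathbb{P}(A_t)=\mathcal{E}[(1-p)^{|H_t^{Q_t}|}]\ge(1-p)^{\mathcal{E}[|H_t^{Q_t}|]}$, which is exactly your ``force the relevant sites to be vacant'' bound.

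The genuine gap is your upper bound in $d\le2$, which you yourself flag as ``where the real work lies.'' The coarse-graining scheme you sketch --- controlling, uniformly over the random space-time region explored by $X$, the probability that the exclusion process empties that region, and proving a sharp emptying estimate with the correct $\rho$- and $p$-dependence --- is left entirely open, and it is not how the result is obtained. The missing observation is that the Arratia-type comparison inequality you already invoke for $d\ge3$ applies \emph{pathwise}, for every fixed realization of $X$, and in \emph{all} dimensions: with $K(z,s)=\gamma\delta_{X(s)}(z)\le0$ one gets
\begin{align*}
\mathbb{E}_\nu^{\xi}\Bigl[\exp\Bigl(\gamma\int_0^t\xi(X(s),s)\,\d s\Bigr)\Bigr]
\le \mathbb{E}_\nu^{\hat\xi}\Bigl[\exp\Bigl(\gamma\int_0^t\hat\xi(X(s),s)\,\d s\Bigr)\Bigr]
\le \exp\Bigl(p\sum_{y\in\mathbb{Z}^d}\bigl(v_X(y,t)-1\bigr)\Bigr),
\end{align*}
where $\hat\xi$ is the field of independent rate-$2d\rho$ walks started from $\nu_p$. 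The right-hand side is precisely the annealed survival probability among a Poisson system of moving traps, whose asymptotics $-4p\sqrt{\rho/\pi}\,\sqrt t$ ($d=1$), resp.\ $-4p\rho\pi\,t/\log t$ ($d=2$), were established by Drewitz--G\"artner--Ram\'irez--Sun (and are maximized at $\kappa=0$, which disposes of the uniformity over the walk's trajectory). This yields the constant $p$ in the upper bound with no new emptying estimate for the exclusion process; the sandwich $K_{p,d}\in[p,\log\frac1{1-p}]$ then follows from the two bounds you already have. A secondary, smaller issue: for $d\ge3$ the paper only proves the upper bound on $\left<U(t)\right>$ (the stated inequality for $\lambda$), so your proposed submultiplicativity argument for existence of the exponential rate is extra work the paper does not carry out; if you do include it, you must justify the decoupling of time blocks for the non-independent exclusion environment rather than assert it.
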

In the next theorem the survival probability in case of the stochastic dormancy mechanism is established:
\begin{theorem}[Survival probability with dormancy]
For all $s_0,s_1>0$ and all $\gamma\in(-\infty,0)$ we have
\begin{align*}
\log\left<U(t)\right>=\left\{\begin{array}{ll}\displaystyle-4K_{p,1}\sqrt{\frac{s_0\rho}{(s_0+s_1)\pi}}\sqrt{t}(1+o(1)), &d=1,\\[13pt]\displaystyle-4K_{p,2}\frac{s_0\rho\pi}{s_0+s_1}\frac{t}{\log(t)}(1+o(1)), &d=2,\\[13pt]\displaystyle-\lambda_{d,\gamma,\rho,p,s_0,s_1}t(1+o(1)), &d\geq 3,\end{array}\right.
\end{align*}
as $t\to\infty$, with a constant $\lambda_{d,\gamma, \rho,p,s_0,s_1}$ satisfying
\begin{align*}
\lambda_{d,\gamma, \rho,p,s_0,s_1}\geq \frac{p\rho}{\frac{\rho}{|\gamma|}+G_d(0)}-C,
\end{align*}
for some $C>0$. 
\end{theorem}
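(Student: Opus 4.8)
The plan is to start from the Feynman--Kac representation and reduce everything to Theorem~1.1 with the time horizon rescaled by the stationary active fraction $\theta:=s_0/(s_0+s_1)$ of the switching chain. By the Feynman--Kac formula for the two-type parabolic Anderson system, with $\gamma=-|\gamma|$,
\begin{align*}
\langle U(t)\rangle=\mathbb{E}_{(0,1)}\Big\langle\exp\Big(-|\gamma|\int_0^t\xi(Y_r,r)\,\mathbf{1}_{\{J_r=1\}}\,\d r\Big)\Big\rangle ,
\end{align*}
where $(Y,J)=(Y_r,J_r)_{r\ge0}$ is the switching random walk on $\mathbb{Z}^d\times\{0,1\}$ started at $(0,1)$: the internal state $J$ is a two-state continuous-time Markov chain with rates $s_0$ from $0$ to $1$ and $s_1$ from $1$ to $0$, while $Y$ moves as a random walk with generator $\kappa\Delta$ during the active periods $\{J=1\}$ and stays frozen during the dormant periods $\{J=0\}$; $\langle\cdot\rangle$ is the average over the exclusion process $\xi$ started from $\nu_p$. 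Thus $\langle U(t)\rangle$ is the annealed survival probability of $(Y,J)$ among the soft traps $|\gamma|\xi$, which act on the walker only while it is active.

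Two structural facts drive the proof. First, writing $\sigma(r):=\int_0^r\mathbf{1}_{\{J_u=1\}}\,\d u$ for the active clock, we have $Y_r=\widetilde{Y}_{\sigma(r)}$ for a random walk $\widetilde{Y}$ with generator $\kappa\Delta$ independent of $J$; the total active time $\tau_t:=\sigma(t)$ is the occupation time of state $1$ and, by the law of large numbers and large deviation bounds for the two-state chain, concentrates at $\theta t$ with cost exponential in $t$. Second, since $0\le\xi\le1$,
\begin{align*}
\int_0^t\xi(Y_r,r)\,\mathbf{1}_{\{J_r=1\}}\,\d r\ \ge\ \int_0^t\xi(Y_r,r)\,\d r-(t-\tau_t) ,
\end{align*}
and the left-hand side depends on the environment only along the active part of the space-time trajectory of $\widetilde{Y}$ run up to time $\tau_t$. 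The heart of the matter is that, to leading order, dormancy amounts to replacing the horizon $t$ in Theorem~1.1 by the active horizon $\theta t$: this yields the factor $\sqrt{\theta}$ in $d=1$ and $\theta$ in $d=2$ (recall $\log(\theta t)=\log t\,(1+o(1))$), and for $d\ge 3$ it produces the asserted lower bound on $\lambda_{d,\gamma,\rho,p,s_0,s_1}$, namely the no-dormancy bound of Theorem~1.1 up to an additive constant $C$ absorbing the dormant-time loss.

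Concretely I would proceed as follows. \textbf{(a)} Condition on the path of $J$ and, at exponential-in-$t$ cost controlled by the two-state large deviations, restrict to $\{|\tau_t-\theta t|\le\eps t\}$; in $d=1,2$ this is negligible on the scales $e^{-c\sqrt t}$ and $e^{-ct/\log t}$, while in $d\ge3$ it only feeds into $C$. \textbf{(b)} For the upper bound on $\langle U(t)\rangle$ in $d=1,2$, rerun the analysis of Theorem~1.1 along the active part of the trajectory: the dominant strategies are those in which the exclusion process creates a favourable low-density island to which the active walker is confined, and since the walker migrates and feels $|\gamma|\xi$ only on a time set of measure $\approx\theta t$, the governing Rayleigh--Ritz/variational quantity is exactly that of Theorem~1.1 with $t$ replaced by $\theta t$. \textbf{(c)} For the matching lower bound, implement the near-optimal environment-plus-path strategy of Theorem~1.1 only during the active periods, the dormant periods costing nothing since $Y$ is then frozen and shielded; one has only to re-synchronise the favourable island across the dormant gaps, an $e^{o(\sqrt t)}$ (resp. $e^{o(t/\log t)}$) correction. \textbf{(d)} For $d\ge3$, the Lyapunov exponent $\lambda_{d,\gamma,\rho,p,s_0,s_1}=-\lim_{t\to\infty}\tfrac1t\log\langle U(t)\rangle$ exists by the same subadditivity argument as for Theorem~1.1, and its lower bound follows by combining the Rayleigh--Ritz lower bound there with the pointwise inequality above and the bound $\tau_t\ge(\theta-\eps)t$ with overwhelming probability; the loss $t-\tau_t\le(1-\theta+\eps)t$ is precisely what produces $C$ (of order $|\gamma|(1-\theta)$).

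The genuine difficulty is step (b) (and, to a lesser extent, (c)): the exclusion process evolves in real time regardless of the walker's state, so a priori the environment must stay favourable over all of $[0,t]$ rather than only over the active set of measure $\theta t$. Showing that the dormant gaps really do relax the environmental constraint down to the active-time scale --- instead of the favourable configuration having to persist through them --- is delicate precisely in $d\le2$, where the exclusion process ``clumps'' and mixes slowly, so the environments seen across successive active intervals remain strongly correlated over macroscopic time scales and cannot be treated as independent refreshments. Controlling these long-range correlations through the stationarity and reversibility of $\nu_p$ and the monotonicity (FKG) of the exclusion dynamics, while extracting exactly the replacement of $t$ by $\theta t$ with the \emph{same} constants $K_{p,1},K_{p,2}$ as in the no-dormancy case, is the crux; once Theorem~1.1 is in hand, the rest --- the two-state large deviations and the $d\ge3$ subadditivity --- is comparatively routine bookkeeping.
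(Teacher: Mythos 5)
Your overall heuristic --- that dormancy replaces the horizon $t$ by the active horizon $\tfrac{s_0}{s_0+s_1}t$ --- is the right one, and you correctly locate the difficulty, but locating it is not the same as resolving it: the two steps you yourself flag as the crux, (b) and (c), are exactly the ones left unproven. For the upper bound your only quantitative tool is the pointwise inequality $\int_0^t\xi(Y_r,r)\mathbf{1}_{\{J_r=1\}}\,\d r\ge\int_0^t\xi(Y_r,r)\,\d r-(t-\tau_t)$, which is useless in $d\le 2$: it compares a quantity of order $\sqrt t$ (resp.\ $t/\log t$) against an error $|\gamma|(1-\theta)t$ of strictly larger order, so it yields only a trivial bound on the scales of interest. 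Your proposed repair --- rerunning the Theorem~1.1 analysis ``along the active part'' and taming the correlations across dormant gaps via stationarity, reversibility and FKG --- is a program, not an argument; no mechanism is given for why the environment need only be controlled on a time set of measure $\theta t$ when the exclusion process evolves (and clumps) throughout $[0,t]$. The paper avoids this entirely: its upper bound fixes a realization of $(X,\alpha)$, applies the comparison inequality \eqref{comparison} of Arratia/G\"artner--den Hollander--Maillard to dominate the exclusion expectation by that of a system of \emph{independent} random walks, integrates out the Bernoulli field to get $\exp\bigl(p\sum_y(v_{(X,\alpha)}(y,t)-1)\bigr)$, and then quotes the already-established asymptotics for the switching walk among a Poisson field of moving traps from \cite{shafigh}. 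No correlation control for the exclusion process is ever needed.

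For the lower bound, your step (c) asserts that re-synchronising the favourable island across dormant gaps costs only $e^{o(\sqrt t)}$ (resp.\ $e^{o(t/\log t)}$); this is precisely the nontrivial claim and it is not justified. The paper's proof makes it precise through the graphical representation: it demands vacancy of $Q_t$ only on the active time set $A_t$, identifies the relevant event with $\{\tilde H_t^{Q_t}\subseteq V_0\}$ where $\tilde H_t^{Q_t}$ is the set of ancestors of graphical-representation paths entering $Q_t\times A_t$, and shows by a time-substitution in the hitting-time integrals that $\mathcal{E}[|\tilde H_t^{Q_t}|]=\mathcal{E}[|H_{L_t(1)}^{Q_t}|]$, i.e.\ exactly the no-dormancy quantity at horizon $L_t(1)$. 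Jensen's inequality, the range asymptotics $\mathbb{E}_0[R_{L_t(1)}]$, and the scaling lemma \eqref{lemma2.4} (which forces $L_t(1)/t\to s_0/(s_0+s_1)$ on sub-$t$ scales) then deliver the stated constants. Without an analogue of this identity, your proposal does not establish either bound in $d\in\{1,2\}$; only your step (d) for $d\ge3$ (where an additive $O(t)$ loss is tolerable, feeding into $C$) is essentially sound.
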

In the catalytic case $\gamma>0$, we first state a variational formula for the exponential growth rate of the population size:
\begin{theorem}[Variational formula for the population size]
For all $\gamma>0$ and all $d\geq 1$ the annealed number of particles $\left<U(t)\right>$ grows exponentially as $t\to\infty$ with rate
\begin{align}\label{varfor}
\lim_{t\to\infty}\frac{1}{t}\log\left<U(t)\right>=\sup_{\substack{f\in\ell^2\left(\{0,1\}^{\mathbb{Z}^d}\times\mathbb{Z}^d\times\{0,1\}\right), \\\|f\|_2=1}}(A_1(f)-A_2(f)-A_3(f)-A_4(f))+\sqrt{s_0s_1},
\end{align}
where
\begin{align*}
A_1(f)&:=\int_{\{0,1\}^{\mathbb{Z}^d}}\nu(\d\eta)\sum_{z\in\mathbb{Z}^d}(i\gamma\eta(z)-s_i)f(\eta,z,1)^2
\\A_2(f)&:=\int_{\{0,1\}^{\mathbb{Z}^d}}\nu(\d\eta)\sum_{z\in\mathbb{Z}^d}\sum_{i\in\{0,1\}}\frac{1}{2}\sum_{x,y\in\mathbb{Z}^d, x\sim y}\rho(f(\eta^{x,y},z,i)-f(\eta,z,i))^2,
\\A_3(f)&:=\int_{\{0,1\}^{\mathbb{Z}^d}}\nu(\d\eta)\sum_{z\in\mathbb{Z}^d}\frac{1}{2}\sum_{y\in\mathbb{Z}^d,y\sim z}\kappa(f(\eta,y,1)-f(\eta,z,1))^2,
\\A_4(f)&:=\int_{\{0,1\}^{\mathbb{Z}^d}}\nu(\d\eta)\sum_{z\in\mathbb{Z}^d}2\sqrt{s_0s_1}(f(\eta,z,1)-f(\eta,z,0))^2.
\end{align*}
\end{theorem}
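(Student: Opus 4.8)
The plan is to derive the variational formula from a Feynman–Kac representation of $\langle U(t)\rangle$ and then apply a Rayleigh–Ritz (spectral) argument together with a large deviation principle for the switching mechanism. First I would write down the Feynman–Kac formula for $u_\xi(x,i,t)$: introduce the switching diffusion $(X_t, I_t)$ on $\mathbb Z^d\times\{0,1\}$, where $X$ performs a simple random walk at rate $\kappa$ when $I=1$ and freezes when $I=0$, and $I$ flips $1\to 0$ at rate $s_1$, $0\to 1$ at rate $s_0$. Then for fixed $\xi$,
\begin{align*}
u_\xi(x,i,t)=\mathbb E_{(x,i)}\Big[\exp\Big(\int_0^t \gamma\,\xi(X_s,s)\,\1\{I_s=1\}\,\d s\Big)\1\{(X_t,I_t)=(0,1)\}\Big],
\end{align*}
and summing over $(x,i)$ and using reversibility of $\nu$ for the exclusion dynamics (run backwards in time) gives $\langle U(t)\rangle$ as an annealed exponential moment
$\big\langle \mathbb E_{(0,1)}[\exp(\gamma\int_0^t \xi(X_s,s)\1\{I_s=1\}\d s)]\big\rangle$, where now $\xi$ is the stationary exclusion process started from $\nu$. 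This is the standard PAM-with-moving-catalysts reduction, here enriched by the extra coordinate $I$.

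Next I would pass to the Markov process on the enlarged state space $\{0,1\}^{\mathbb Z^d}\times\mathbb Z^d\times\{0,1\}$ consisting of the pair $(\xi_t, X_t, I_t)$, whose generator is $\rho L_{\mathrm{SE}} + \kappa \Delta_X \1\{I=1\} + G_{\mathrm{switch}}$ acting in the respective variables, and add the multiplicative potential $V(\eta,z,i):=i\gamma\eta(z)$. The quantity $\frac1t\log\langle U(t)\rangle$ is then, by the standard large-time asymptotics for exponential functionals of a reversible Markov process (a spectral / Donsker–Varadhan argument, or directly the Rayleigh–Ritz formula since $\nu$ is reversible for $L_{\mathrm{SE}}$), the top of the spectrum of the self-adjoint operator $\mathcal H := V + \rho L_{\mathrm{SE}} + \kappa\Delta_X\1\{I=1\} + G_{\mathrm{switch}}$ on $\ell^2(\nu\otimes\text{counting})$. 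Writing out $\langle \mathcal H f, f\rangle$ as a quadratic form produces exactly $A_1(f)$ (the potential term $i\gamma\eta(z)$ together with the diagonal switching loss $-s_i$), $-A_2(f)$ (the exclusion Dirichlet form, with the factor $\frac12$ absorbing the double counting of ordered edges), and $-A_3(f)$ (the spatial Dirichlet form, active only when $i=1$). The term $A_4(f)$ and the additive constant $\sqrt{s_0 s_1}$ arise from symmetrising the non-self-adjoint switching generator: conjugating the two-state chain's generator by the square root of its invariant measure turns the off-diagonal rates $s_0,s_1$ into the symmetric off-diagonal entry $\sqrt{s_0 s_1}$ and shifts the diagonal, so that after the ground-state transform the switching part contributes $\sqrt{s_0 s_1} - 2\sqrt{s_0 s_1}(f(\cdot,1)-f(\cdot,0))^2$ up to the $-s_i$ already counted in $A_1$; this is the algebraic computation that pins down $A_4$ and the $+\sqrt{s_0 s_1}$.

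I would carry this out in steps: (1) establish the Feynman–Kac identity and the annealed reduction via reversibility; (2) identify $\frac1t\log\langle U(t)\rangle$ with $\sup\mathrm{spec}\,\mathcal H$, which requires an upper bound (Rayleigh–Ritz / spectral calculus, using that $V$ is bounded above by $\gamma$ since $\eta\le 1$ and $i\le 1$, so $\mathcal H$ is bounded above and the exponential moments are finite) and a matching lower bound (restrict to test functions, or a standard "insertion" argument forcing $X$ and the exclusion configuration to behave favourably on a large box); (3) compute the quadratic form $\langle\mathcal H f,f\rangle$ explicitly and perform the two-state symmetrisation to read off $A_1,\dots,A_4$ and the constant. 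The main obstacle I expect is Step (2): the state space $\{0,1\}^{\mathbb Z^d}$ is infinite-dimensional and non-compact, so one cannot invoke compactness of the resolvent; the clean way is to note that $\mathcal H$ restricted to $\ell^2(\nu\otimes\#)$ is a bounded-above self-adjoint operator (potential bounded above, the rest a sum of negative semidefinite Dirichlet forms and a bounded switching part), invoke the spectral theorem to get $\langle U(t)\rangle = \langle \delta_{(0,1)}, e^{t\mathcal H}\delta_{(0,1)}\rangle$-type bounds, and then show the initial vector $\delta_{\cdot}$ overlaps the top of the spectrum — this last point, i.e.\ that there is no spectral gap issue hiding the true exponential rate, is where care is needed and where I would invoke the homogeneity/ergodicity of $\nu$ under spatial shifts to conclude that the localized initial condition still sees $\sup\mathrm{spec}\,\mathcal H$. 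The finiteness of the $\sup$ in \eqref{varfor} is itself guaranteed by $A_1(f)\le \gamma + \sqrt{s_0 s_1}$ combined with $\|f\|_2=1$.
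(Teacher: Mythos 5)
Your proposal is correct and follows essentially the same route as the paper: Feynman--Kac plus reversibility of $\nu$, symmetrisation of the asymmetric switching generator (you do it by conjugating with the square root of the invariant measure of the two-state chain, the paper by an explicit Palmowski--Rolski change of path measure with density $\exp(\sqrt{s_0s_1}\,t-s_0\tilde L_t(0)-s_1\tilde L_t(1))$ --- these yield the identical self-adjoint operator $\tilde L+V$ and the same additive constant $\sqrt{s_0s_1}$), followed by Rayleigh--Ritz with a box-restriction upper bound and a spectral-family lower bound. The overlap issue you flag in Step (2) is resolved in the paper exactly along the lines you suggest: one runs the chain for a unit time so that, by positivity of the transition kernel, the delta initial condition dominates a constant multiple of any finitely supported test function approximating the top of the spectrum.
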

The next theorem quantifies the explicit growth rate in the lower dimensions $d\in\{1,2\}$ and provides bound for the growth rate in dimensions $d\geq 3$:
\begin{theorem}[Growth rate]
Let $\gamma\in(0,\infty)$ .
\begin{itemize}
\item[(a)] If $d\in\{1,2\}$, then
\begin{align*}
\lim_{t\to\infty}\frac{1}{t}\log\left<U(t)\right>=\gamma-s_1-\frac{(\gamma+s_0-s_1)^2-s_0s_1}{\sqrt{\gamma^2+2\gamma(s_0-s_1)+(s_0+s_1)^2}}.
\end{align*}
\item [(b)] If $d\geq 3$, then
\begin{align*}
\gamma> \lim_{t\to\infty}\frac{1}{t}\log\left<U(t)\right>>\left\{\begin{array}{ll}
\gamma p, &s_1\leq s_0,\\\gamma p -s_1+s_0, &s_1>s_0 \text{ and } \gamma p-s_1+s_0\geq 0,\\0, &s_1>s_0 \text{ and } \gamma p-s_1+s_0<0.\end{array}\right.
\end{align*}
\end{itemize}
\end{theorem}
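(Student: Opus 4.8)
The plan is to evaluate the supremum in the variational formula of the preceding theorem. Write $Q(f) = A_1(f) - A_2(f) - A_3(f) - A_4(f)$, so that $\lim_{t\to\infty}\frac1t\log\langle U(t)\rangle = \sup_{\|f\|_2=1}Q(f) + \sqrt{s_0 s_1} = \lambda_{\max}(\mathcal H) + \sqrt{s_0 s_1}$, where $\mathcal H$ is the self-adjoint operator on $\ell^2(\nu \otimes \mathrm{count})$ whose form is $Q$: it is the sum of multiplication by the potential $V(\eta,z,i) = i\gamma\eta(z) - s_i$, the exclusion generator $\rho L_{\mathrm{SE}}$ (acting on $\eta$, both types), the migration Laplacian $\kappa\Delta$ (acting on $z$, active type only), and the symmetrised switching operator $\bigl(\begin{smallmatrix}-s_1 & \sqrt{s_0 s_1}\\ \sqrt{s_0 s_1} & -s_0\end{smallmatrix}\bigr)$. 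The governing feature is the recurrence/transience dichotomy for the symmetric exclusion process: in $d\le 2$ it clumps, so a region can be kept (almost) fully occupied at sub-exponential cost and the exclusion Dirichlet form $A_2$ is asymptotically free; in $d\ge 3$ it is not.

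For part (a), $d\in\{1,2\}$, I would prove two matching bounds. The upper bound is soft: since $A_2(f), A_3(f)\ge 0$ and $\eta(z)\le 1$, dropping those two forms and replacing $\eta(z)$ by $1$ leaves, for each fixed $(\eta,z)$, a quadratic form in $\bigl(f(\cdot,z,1),f(\cdot,z,0)\bigr)$ bounded by $\lambda_{\max}(M)\bigl(f(\cdot,z,1)^2+f(\cdot,z,0)^2\bigr)$ with $M := \bigl(\begin{smallmatrix}\gamma-s_1 & \sqrt{s_0 s_1}\\ \sqrt{s_0 s_1} & -s_0\end{smallmatrix}\bigr)$, so that $\lambda_{\max}(\mathcal H)+\sqrt{s_0 s_1}\le\lambda_{\max}(M)$. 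For the lower bound I would test with $f_n(\eta,z,i) = g_n(\eta)\,h_i(z)$, where $h_1,h_0$ are fixed proportional bump profiles spread over a large box — making $A_3(f_n)\to 0$, fixing the active mass $q = \|h_1\|_2^2$, and turning $A_4(f_n)$ into an explicit function of $q$ — and $g_n$ is an $\eta$-profile concentrated on near-full occupation of that box whose exclusion Dirichlet form is of smaller order than $\|g_n\|_2^2$; the existence of such $g_n$ is precisely the clumping property of the exclusion process in $d\le 2$. Then $A_1(f_n)\to(\gamma-s_1)q - s_0(1-q)$, and optimising the resulting function over $q\in[0,1]$ returns $\lambda_{\max}(M)$, whose explicit value is the growth rate displayed in (a).

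For part (b), $d\ge 3$, let $\mathcal H_{\mathrm{PAM}} := \gamma\eta(z) + \rho L_{\mathrm{SE}} + \kappa\Delta$ be the operator governing the active type alone; by the transient-dimension analysis of the parabolic Anderson model with exclusion catalyst (the $d\ge 3$ part of \cite{shafigh}, built on the Green's-function bound already appearing in the first theorem above) its top eigenvalue $\lambda^{*}$ satisfies $\gamma p < \lambda^{*} < \gamma$. Writing $\mathcal H$ as a $2\times2$ block operator with diagonal blocks $\mathcal H_{\mathrm{PAM}} - s_1$ and $\rho L_{\mathrm{SE}} - s_0$ and off-diagonal blocks $\sqrt{s_0 s_1}\,\mathrm{Id}$, one obtains $\lambda_{\max}(\mathcal H)+\sqrt{s_0 s_1}\le\lambda_{\max}\bigl(\begin{smallmatrix}\lambda^{*}-s_1 & \sqrt{s_0 s_1}\\ \sqrt{s_0 s_1} & -s_0\end{smallmatrix}\bigr)$, which lies strictly below $\gamma$ because $\lambda^{*}<\gamma$ — this is the upper bound in (b). For the lower bounds I would test with the near-maximiser of $\mathcal H_{\mathrm{PAM}}$ on the active type and a suitable, nearly $\eta$-constant profile on the dormant type; feeding the exclusion-localisation gain into the homogenised value $\gamma p$ pushes $\lambda_{\max}(\mathcal H)+\sqrt{s_0 s_1}$ strictly past $\max\{\gamma p - (s_1-s_0)_+,\, 0\}$, and the three regimes in (b) simply record whether the switching penalty $s_1 - s_0$ is more than compensated by that gain, only partly compensated, or overwhelming.

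The two hard points, both imported (with adaptations to the two-type setting) from the parabolic-Anderson-with-exclusion theory, are: in (a), the clumping estimate for $g_n$ — an $\eta$-profile on a large box with exclusion Dirichlet form genuinely $o(\|g_n\|_2^2)$ yet occupation density tending to $1$, which is exactly why the closed formula is confined to $d\le 2$; and in (b), making the exclusion-localisation lower bound sharp enough to beat $\gamma p$ strictly, i.e.\ quantifying the gain from tilting the environment above density $p$ against the cost paid in $A_2$ and in the switching term. The remaining ingredients — the block-operator estimate, the scalar spectral comparisons, and the final simplification of the $2\times2$ eigenvalue — are routine.
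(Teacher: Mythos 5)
Your part (a) takes a genuinely different route from the paper. The paper never evaluates the variational formula \eqref{varfor} in $d\le 2$: its upper bound is simply $\xi\le 1$ followed by Varadhan's lemma for $\tfrac1tL_t(1)$ with the rate function $I$ of \eqref{I}, and its lower bound confines the walk to a fixed finite box, forces the exclusion process to fill that box at sub-exponential cost (the clumping estimate), and applies Varadhan again; the closed formula is then the Legendre transform $\sup_{a}\{\gamma a-I(a)\}$, which is exactly your $\lambda_{\max}(M)$. Your upper bound (discard $A_2,A_3$, bound $\eta(z)\le1$, then a pointwise $2\times2$ estimate) is correct and is the spectral shadow of the paper's. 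Your lower bound, however, rests on the existence of $\eta$-profiles $g_n$ with exclusion Dirichlet form $o(\|g_n\|_2^2)$ and occupation density tending to $1$ on a large box; the clumping statement available in the literature (and quoted in Section 2.2) is a \emph{probability} estimate, not this Dirichlet-form statement, so you would either have to derive the functional version (e.g.\ from the Lyapunov exponent of the hard catalyst via the spectral theorem) or follow the paper's probabilistic route. This is bridgeable, but it is the entire content of the $d\le2$ case and cannot be left as a pointer.

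For part (b), your block-operator upper bound is correct and is a different, arguably cleaner argument than the paper's, which passes to the environmental process, discards the (nonpositive) shift form, and reduces to the occupation-time LDP with rate $I_\xi$ of \eqref{Ixi} to get $\sup_x\{\gamma x-I_\xi(x)\}<\gamma$. Your chain $\lambda_{\max}(\mathcal H)+\sqrt{s_0s_1}\le\lambda_{\max}\bigl(\begin{smallmatrix}\lambda^{*}-s_1&\sqrt{s_0s_1}\\ \sqrt{s_0s_1}&-s_0\end{smallmatrix}\bigr)<\lambda^{*}<\gamma$ is valid (the middle strict inequality because the characteristic polynomial of that matrix evaluated at $\lambda^{*}$ equals $s_1\lambda^{*}>0$); note that the input $\lambda^{*}\in(\gamma p,\gamma)$ comes from \cite{exclusion}, not from \cite{shafigh}.

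The genuine gap is your lower bound in (b). The homogenised value produced by an $\eta$-independent product test function is exactly $\sup_a\{\gamma p\,a-I(a)\}=\lambda_{\max}\bigl(\begin{smallmatrix}\gamma p-s_1&\sqrt{s_0s_1}\\ \sqrt{s_0s_1}&-s_0\end{smallmatrix}\bigr)$, which by the same characteristic-polynomial computation is \emph{strictly below} $\gamma p$ by a fixed, $\varepsilon$-independent amount whenever $s_1>0$; the gain from tilting the test function by $1+\varepsilon\eta(x)$ is only $O(\varepsilon)$ against an $O(\varepsilon^2)$ cost. Hence ``feeding the localisation gain into the homogenised value $\gamma p$'' cannot push the bound strictly past $\gamma p$ (nor past $\gamma p-s_1+s_0$ in the second regime); the mechanism you describe proves only a bound of the form $\sup_a\{\gamma p\,a-I(a)\}+c\varepsilon$. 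Be aware that the paper's own proof stumbles at the same spot: its conditions on $\phi_\varepsilon$ (prescribed masses $a_0,a_1$ together with $\sum_x(\phi_\varepsilon(x,1)-\phi_\varepsilon(x,0))^2\le\varepsilon^2$) force $a_0=a_1=\tfrac12+O(\varepsilon)$, so the concluding optimisation at $a_1=1$ is not available. In fact your own block upper bound shows the displayed lower bound cannot hold for all parameters: for $s_0=s_1=s\to\infty$ it yields $\lim_{t\to\infty}\tfrac1t\log\langle U(t)\rangle\le\tfrac{\lambda^{*}}2+o(1)$, which is below $\gamma p$ as soon as $p>\tfrac12$. So this is not a gap you can close by sharpening estimates; the target itself needs to be re-examined.
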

\subsection{Relation to the Parabolic Anderson Model}
Recall the number of particles $u(x,i,t)$ in point $x\in\mathbb{Z}^d$ and state $i\in\{0,1\}$ at time $t$ as defined in \eqref{udef}. It is already known (cf.\,\cite{baran}) that $u:\mathbb{Z}^d\times\{0,1\}\times[0,\infty)\to\mathbb{R}$ solves the partial differential equation
\medskip
\begin{align}\label{pamswitching}
\left\{\begin{array}{lllr}\frac{\d}{\d t}u(x,i,t)&=&i\kappa\Delta u(x,i,t) + Q u(x,i,t)+i\gamma\xi(x,t)u(x,i,t), &t>0, \\[12pt]u(x,i,0)&=&\delta_{(0,1)}(x,i),
\end{array}\right.
\end{align}
where
\begin{align*}
Q u(x,i,t):= s_i(u(x,1-i,t)-u(x,i,t))
\end{align*}
and
\begin{align*}
\Delta u(x,i,t):=\sum_{y\in\mathbb{Z}^d, x\sim y}[u(y,i,t)-u(x,i,t)].
\end{align*}
We call \eqref{pamswitching} the \emph{parabolic Anderson model with switching}. The parabolic Anderson model \emph{without} switching, i.\,e.\ with only one (active) type, has been studied intensely during the past years and a comprehensive overview of results can be found in \cite{PAM}. One of the most powerful tools and often the starting point of the analysis of the PAM is the \emph{Feynman-Kac formula}, which asserts that the time evolution of all particles up to a deterministic time can be expressed as an expectation over single particle moving around according to the same migration kernel and with a \emph{varying mass}, which corresponds to the underlying population size. Let us formulate the Feynman-Kac formula in the setting of our dormancy strategy. To this end, let $\alpha=(\alpha(t))_{t\geq 0}$ be a continuous-time Markov process with state space $\{0,1\}$ and generator
\begin{align}\label{Q}
Qf(i):=s_i(f(1-i)-f(i))
\end{align}
for $f:\{0,1\}\to\mathbb{R}$. Conditioned on the evolution of $\alpha$, we define a continuous-time random walk $X=(X(t))_{t\geq 0}$ on $\mathbb{Z}^d$ which is supposed to stay still at a time $t$, if $\alpha(t)=0$, or perform a simple symmetric walk with jump rate $2d\kappa$, if $\alpha(t)=1$. In other words, the joint process $(X,\alpha)$ is the Markov process with the generator
\begin{align}\label{Lxa}
\mathcal{L}f(x,i):=i\kappa\sum_{y\sim x}(f(y,i)-f(x,i))+s_i(f(x,1-i)-f(x,i))
\end{align} 
for $x\in\mathbb{Z}^d$, $i,j\in\{0,1\}$ and a test function $f:\mathbb{Z}^d\times \{0,1\}\to\mathbb{R}$. Note, that the random walk $X$ itself is not Markovian due to the dependence on $\alpha$. Then, we call $(X,\alpha)$ a \emph{regime-switching random walk} (cf.\,\cite{switching} for the continuous-space version) and interpret $X$ as a particle which is \emph{active} at time $t$, if $\alpha(t)=1$, and \emph{dormant} otherwise. Then, given a fixed realization of $\xi$, the formal solution of \eqref{pamswitching} is given by the \emph{Feynman-Kac formula}
\begin{align}\label{fkformula}
u(x,i,t)=\mathbb{E}_{(x,i)}^{(X,\alpha)}\left[\exp\left(\int_0^t\gamma\alpha(s)\xi(X(s),t-s)\,\d s\right)\delta_{(0,1)}(X(t),\alpha(t))\right],
\end{align}
where $\mathbb{E}_{(x,i)}^{(X,\alpha)}$ denotes the expectation over the joint process $(X,\alpha)$ starting in $(x,i)$ (cf.\,\cite{baran}). Thus, the study of our two-type branching process can be reduced to the analysis of only one particle with the same migration, branching and switching rates.

\subsection{Related Results}
The parabolic Anderson model without switching has been a topic of great interest during the past years and has been studied for several different random environments built out of particles. For a recent overview of results related to the Parabolic Anderson model we refer to \cite{PAM}. In the catalytic case $\gamma>0$, the simple symmetric exclusion process as a random environment has been investigated in \cite{exclusion}, in which the authors prove exponential growth of the annealed population size $\left<U(t)\right>$ as $t\to\infty$. More precisely, in \cite[Proposition 2.2.2]{exclusion} it has been proven that
\begin{align*}
\lim_{t\to\infty}\frac{1}{t}\log\left<U(t)\right>=\sup_{\substack{f\in\ell^2\left(\{0,1\}^{\mathbb{Z}^d}\times\mathbb{Z}^d\right), \\\|f\|_2=1}}(A_1(f)-A_2(f)-A_3(f))
\end{align*}
with
\begin{align*}
A_1(f)&:=\int_{\{0,1\}^{\mathbb{Z}^d}}\nu(\d\eta)\sum_{x\in\mathbb{Z}^d}\eta(x)f(\eta,x)^2,
\\A_2(f)&:=\int_{\{0,1\}^{\mathbb{Z}^d}}\nu(\d\eta)\sum_{z\in\mathbb{Z}^d}\frac{1}{2}\sum_{x,y\in\mathbb{Z}^d, x\sim y}\rho(f(\eta^{x,y},z)-f(\eta,z))^2,
\\A_3(f)&:=\int_{\{0,1\}^{\mathbb{Z}^d}}\nu(\d\eta)\sum_{z\in\mathbb{Z}^d}\frac{1}{2}\sum_{y\in\mathbb{Z}^d,y\sim z}\kappa(f(\eta,y)-f(\eta,z))^2,
\end{align*}
for all dimensions $d\geq 1$. Moreover, \cite[Theorem 1.3.2(a)]{exclusion} asserts that, under our assumptions as defined in section 1.4, we have that
\begin{align*}
\lim_{t\to\infty}\frac{1}{t}\log\left<U(t)\right>\left\{\begin{array}{ll}=\gamma, &d\in\{1,2\},\\\in(p\gamma, \gamma), &d\geq 3.
\end{array}\right.
\end{align*}
In the trapping case $\gamma<0$ and in the higher dimensions $d\geq 3$, the upper bound on the survival probability of a random walk in a dynamic random trap model has been studied in \cite{upper} within a more general framework. More precisely, it was shown that
\begin{align*}
\limsup_{t\to\infty}\frac{1}{t}\log\left<U(t)\right><0
\end{align*}
under certain assumptions on the environment, which are satisfied for the simple symmetric exclusion. To the best of our knowledge, this result is currently the only one addressing the large-time asymptotics of the survival probability in the context of the exclusion process as a random environment. Notably, precise asymptotics in the lower dimensions $\d\leq 2$ remain unexplored, such that our Theorem 1.1 appears to fill a gap in the study of the Parabolic Anderson model (without switching) driven by the exclusion process as random environment. 

Recently, the Parabolic Anderson model with the stochastic dormancy strategy as defined in section 1.3  has been studied in \cite{shafigh} for some specific choices of the random environment $\xi$. More precisely, in both cases $\gamma>0$ and $\gamma<0$, the annealed number of particles resp.\,the asymptotics of annealed survival probability $\left<U(t)\right>$ has been quantified for the cases when $\xi$ is given by 1) a Bernoulli field of immobile particles, 2) one moving particles, and 3) a Poisson field of independently moving particles. In the latter case, it has been shown that, if $\gamma\in[-\infty,0)$, the annealed survival probability $\left<U(t)\right>$ converges exponentially fast to $0$ as $t\to\infty$ in all dimensions $d\geq 1$ and obeys the asymptotics
\medskip
\begin{align}\label{asy3-}
\log\left<U(t)\right>=\left\{\begin{array}{ll}\displaystyle-4p\sqrt{\frac{\rho s_0}{(s_0+s_1)\pi}}\sqrt{t}(1+o(1)), &d=1,\\[13pt]\displaystyle-4p\frac{\rho\pi s_0}{s_0+s_1}\frac{t}{\log\left(t\right)}(1+o(1)), &d=2,\\[13pt]\displaystyle-\mu_{d,\gamma, \rho,p,s_0,s_1} t(1+o(1)), &d\geq 3,\end{array}\right.
\end{align}
as $t\to\infty$, for some constant $\mu_{d,\gamma, \rho,p,s_0,s_1}>0$ depending on all the parameters. Interestingly, the decays rates, at least in dimensions $d\in\{1,2\}$, equal our decay rates in case of the exclusion process. However, for $\gamma>0$, the large-time behaviour of the population size $\left<U(t)\right>$ does not appear to closely align with the large-time asymptotics observed in the simple exclusion process. More precisely, $\left<U(t)\right>$ has been shown in \cite{shafigh} to grow double-exponentially fast with limiting rate given by
\begin{align}\label{asy3+}
\lim_{t\to\infty}\frac{1}{t}\log\log\left<U(t)\right>=\sup_{f\in\ell^2(\mathbb{Z}^d),\|f\|_2=1}\left(\gamma f(0)^2-\frac{1}{2}\sum_{x,y\in\mathbb{Z}^d, x\sim y}\rho(f(x)-f(y))^2\right),
\end{align}
which coincides with the growth rate without dormancy, as studied in \cite{drewitz}. In contrast, \cite[Theorem 1.1(b)]{shafigh} asserts that the annealed population size $\left<U(t)\right>$ in case of a Bernoulli field of immobile particles grows exponentially fast in all dimensions $d\geq 1$ with rate given by
\begin{align*}
\lim_{t\to\infty}\frac{1}{t}\log\left<U(t)\right>=\gamma-s_1-\frac{(\gamma+s_0-s_1)^2-s_0s_1}{\sqrt{\gamma^2+2\gamma(s_0-s_1)+(s_0+s_1)^2}},
\end{align*}
which coincides with our growth rate in the case of the exclusion process in the lower dimensions $d\in\{1,2\}$. This comparison suggests that the exclusion process in lower dimension tends to stay close to the initial Bernoulli distribution to maximize the growth rate. As we will demonstrate in the proofs, this behaviour is a consequence of the fact that, in lower dimensions, remaining stationary incurs a lower cost for the exclusion process relative to the corresponding growth rate.
\section{Preparatory facts}
In the following, we will consistently use the notation $\mathbb{P}^{Y}_\mu$ and $\mathbb{E}^{Y}_\mu$ to denote the distribution and expectation, respectively, with respect to any random sequence $Y:=(Y(t))_{t\geq 0}$ with initial measure $\mu$. For real-valued sequences with a delta initial distribution $\mu=\delta_y$, we will write $\mathbb{P}^{Y}_y$ and $\mathbb{E}^{Y}_y$, respectively, and this convention will be used throughout the paper without further clarification. 
\subsection{Total population size}
As discussed in the introduction, our proofs and considerations regarding the total population size are founded on the Feynman-Kac formula \eqref{fkformula}, which serves as the cornerstone for the subsequent steps throughout the remainder of this paper. Recall, that the symmetric exclusion is reversible in time under $\nu$, in the sense that $(\xi(\cdot,t))_{0\leq t\leq T}$ is equally distributed to $(\xi(\cdot,T-t))_{0\leq t\leq T}$, for all $T>0$. Hence, in the same manner as in \cite[Section 2.1]{shafigh}, we observe that the total population size can be expressed as 
\begin{align*}
\left<U(t)\right>=\mathbb{E}_\nu^\xi\mathbb{E}_{(0,1)}^{(X,\alpha)}\left[\exp\left(\int_0^t\gamma\alpha(s)\xi(X(s),s)\,\d s\right)\right].
\end{align*}
\subsection{Symmetric Exclusion: Large deviations and the environmental process}
In this section we summarize a few known properties of the symmetric exclusion process, which will be used in the proofs of our results. We start with some notations. For a test function $f\in\ell^2(\{0,1\}^{\mathbb{Z}^d})$, where the integral is taken with respect to the invariant measure $\nu$, write
\begin{align*}
\mathcal{D}(f):=\left<-L_{\text{SE}}f,f\right>&=-\int_{\{0,1\}^{\mathbb{Z}^d}}\nu(\d\eta)\rho\sum_{x\in\mathbb{Z}^d}\sum_{\substack{y\in\mathbb{Z}^d\\x\sim y}}(f(\eta^{x,y})-f(\eta))f(\eta)
\\&=\frac{1}{2}\rho\int_{\{0,1\}^{\mathbb{Z}^d}}\nu(\d\eta)\sum_{\substack{x,y\in\mathbb{Z}^d\\x\sim y}}(f(\eta^{x,y})-f(\eta))^2
\end{align*}
for the Dirichlet form associated with $L_{\text{SE}}$ and define
\begin{align}\label{Ixi}
I_\xi(x):=\inf_{\substack{\mu\in\mathcal{M}_1\left(\{0,1\}^{\mathbb{Z}^d}\right)\\\int_{\{0,1\}^{\mathbb{Z}^d}}\mu(\d\eta)\eta(0)=x}}\lim_{\varepsilon\to 0}\inf_{\substack{\varphi\in B(\mu,\varepsilon)\\\varphi\ll\nu}}\mathcal{D}\left(\sqrt{\frac{\d\varphi}{\d\nu}}\right),
\end{align}
where we denote by $\mathcal{M}_1\left(\{0,1\}^{\mathbb{Z}^d}\right)$ the set of probability measures on $\{0,1\}^{\mathbb{Z}^d}$ and by $B(\mu,\varepsilon)$ the open ball of radius $\varepsilon$ around $\mu$. Further, let
\begin{align*}
T_t:=\int_0^t\xi(0,s)\d s
\end{align*} 
be the \emph{local time} of the exclusion process in the origin up to time $t$. Then, it is already known from \cite{landim} that the normalized local times $(\small\frac{1}{t}T_t)$ satisfy a large deviation principle on scale $t$ and with rate function $I_\xi$ in dimensions $d\geq 3$, which is non-trivial in these dimensions as seen from the Dirichlet form, as well as convex and lower semi-continuous with a unique zero at $x=\rho$. However, in the lower dimensions $d\in\{1,2\}$ the rate function \eqref{Ixi} equals zero due to the recurrence of the random walk, such that a different scaling than $t$ would be needed. Regarding this, in \cite{landim} and \cite{arratia} it has been shown that $(\small\frac{1}{t}\int_0^t\xi(0,s)\,\d s)_{t>0}$ satisfies a large deviation principle on $[0,1]$, as $t\to\infty$, with speed
\begin{align}\label{scales}
a_t:=\left\{\begin{array}{ll}\sqrt{t}, &d=1,\\\frac{t}{\log t}, &d=2,
\end{array}\right.
\end{align}
and a non-trivial rate function, which we shall not need in the following. In \cite{exclusion}, similar methods have been used to derive a lower bound with the same scales as \eqref{scales} on the occupation probability of a finite box. More precisely, \cite[Lemma 3.1.1]{exclusion} asserts that in dimensions $d\in\{1,2\}$ and for any finite box $Q\subseteq \mathbb{Z}^d$, 
\begin{align*}
\mathbb{P}_\nu^\xi(\xi(x,s)=i \text{ for all } x\in Q \text{ for all } s\leq t)\geq \e^{-C_d|Q|a_t}
\end{align*}
for some constants $C_1,C_2>0$. In other words, the exclusion process exhibits a \emph{clumping property} in lower dimensions, meaning that it takes significantly longer for a finite box $Q$ filled with particles to empty, or for a vacant box $Q$ to become filled with particles, relative to the time scale $t$.

An important and useful concept in the study of random walks on top of the exclusion process is the \emph{environmental process}, also known as the \emph{environment as seen from the walker}. This process captures how the environment evolves at the locations visited by the random walk. In our case, as the random walk depends on $\alpha$, the environmental process is defined as the Markov process $(\zeta, \alpha)$ with generator
\begin{align*}
L_{\text{EP}}f(\eta, i)=L_{\text{SE}}f(\cdot,i)(\eta)+Qf(\eta,\cdot)(i)+
\sum_{y\sim 0}i\kappa(f(\tau_y\eta,i)-f(\eta,i)),
\end{align*}
where $\tau_y$ denotes the shift-operator in $y$, i.\,e.\,,
\begin{align}\label{shift}
\tau_y\eta(z)=\eta(z+y).
\end{align}
In other words, 
\begin{align}\label{zeta}
\zeta(x,t)=\tau_{X(t)}\xi(x,t)=\xi(x+X(t),t),
\end{align}
such that the environmental process changes whenever the random walk moves or the configuration of the exclusion process changes. It is worth mentioning that the Bernoulli product measure $\nu$ remains an invariant measure for the environmental process (cf.\,\cite{liggett}).
\subsection{Switching mechanism: Large deviations and change of measure} 
Let us recall a large deviation principle which we will frequently use throughout the paper. In the following, we define
\begin{align*}
L_t(i):=\int_0^t\alpha(s)\d s, \qquad i\in\{0,1\},
\end{align*}
as the \emph{local times} of the Markov chain $\alpha$ in state $i$ up to time $t>0$. It was shown in \cite[Corollary 2.2]{shafigh} that the normalized local times $(\small\frac{1}{t}L_t(1))_{t>0}$ in the \emph{active} state $1$ satisfy a large deviation principle on $[0,1]$ with rate function $I:[0,1]\to\mathbb{R}$ given by
\begin{align}\label{I}
I(a)=-2\sqrt{s_0s_1a(1-a)}+(s_1-s_0)a+s_0,
\end{align}
where we recall that $s_0,s_1\geq 0$ are the switching rates of $\alpha$. This strictly convex rate function has a unique zero in $\small\frac{s_0}{s_0+s_1}$ and is valid for every choise of $s_0,s_1\geq 0$. In particular, when $s_0=s_1$, the rate function simplifies to $I(a)=s(\sqrt{a}-\sqrt{1-a})^2$ which corresponds to the well-known large deviation rate function for symmetric transition rates (cf.\,\cite[Theorem 3.6.1 and Remark 3.6.4]{koenigldp}).

One consequence of the large deviation result for $(\small\frac{1}{t}L_t(1))_{t>0}$ on scale $t$ is the following: for any sequence $(f(t))_{t\geq 0}$ of positive real numbers with $\lim_{t\to\infty}f(t)=\infty$ and $\lim_{t\to\infty}\frac{f(t)}{t}=0$, and for any continuous and bounded function $F\colon[0,1]\to\mathbb{R}$,
\begin{align}\label{lemma2.4}
\lim_{t\to\infty}\frac{1}{f(t)}\log\mathbb{E}_{1}^{\alpha}\left[\e^{f(t)F\left(\frac{1}{t}L_t(1)\right)}\right]=F\left(\frac{s_0}{s_0+s_1}\right).
\end{align}
This is the assertion of \cite[Lemma 2.4]{shafigh} and implies that on any smaller scale than $t$, the best the sequence $(\small\frac{1}{t}L_t(1))_{t>0}$ can do in order to maximize the expectation on the left hand-side of \eqref{lemma2.4} is to take its average value $\small\frac{s_0}{s_0+s_1}$.
 
The fact that our transitions rates are allowed to be asymmetric not only changes the large deviation rate function but may also introduce additional challenges. Specifically, one of the proof techniques we will use later to derive the representation \eqref{varfor} relies on the Perron-Frobenius spectral theory for bounded self-adjoint operators. However, the matrix $Q$, defined in \eqref{Q}, is not symmetric, and consequently, any operator involving $Q$ may fail to be self-adjoint. To address this issue, it was shown in \cite[Corollary 2.6]{shafigh} using a result from \cite{girsanovmarkov} that if $\tilde{\alpha}$ is a Markov process on $\{0,1\}$ with symmetric generator
\begin{align}\label{deftildeq}
\tilde{Q}f(i):=\sqrt{s_0s_1}(f(1-i)-f(i))
\end{align}
for $f\colon\{0,1\}\to\mathbb{R}$, then
\begin{align}\label{LEP}
\frac{\d\mathbb{P}^{\alpha}_1}{\d\mathbb{P}^{\tilde{\alpha}}_1}\Big|_{\mathcal{F}_t}=\exp\left(\sqrt{s_0s_1}t-s_0\tilde{L}_t(0)-s_1\tilde{L}_t(1)\right)
\end{align}
where we wrote $\tilde{L}_t(i)=\int_0^t\delta_i(\tilde{\alpha}(s))\,\d s$ for the local times of $\tilde{\alpha}$ in state $i\in\{0,1\}$ up to time $t$. In particular, if $(\xi,\tilde{X},\tilde{\alpha})$ is the Markov process with symmetric generator
\begin{align*}
\tilde{L}f(\eta,x,i):=L_{\text{SE}}f(\cdot,x,i)(\eta)+i\kappa\sum_{y\sim x}(f(\eta,y,i)-f(\eta,x,i))+\sqrt{s_0s_1}(f(\eta,x,1-i)-f(\eta,x,i))
\end{align*}
for test functions $f\colon\{0,1\}^{\mathbb{Z}^d}\times\mathbb{Z}^d\times\{0,1\}\to\mathbb{R}$, then
\begin{align}\label{rndichte}
\frac{\d\mathbb{P}^{(\xi,X,\alpha)}_{\nu,(0,1)}}{\d\tilde{\mathbb{P}}^{(\xi,\tilde{X},\tilde{\alpha})}_{\nu,(0,1)}}\Big|_{\mathcal{F}_t}=\exp\left(\sqrt{s_0s_1}t-s_0\tilde{L}_t(0)-s_1\tilde{L}_t(1)\right),
\end{align}
since the generator of $X$ conditioned on $\alpha$ matches that of $\tilde{X}$ conditioned on $\tilde{\alpha}$, and since $\alpha$ and $\tilde{\alpha}$ are independent of $X$  and $\tilde{X}$, respectively, as well as of $\xi$. It is straightforward to verify that $\tilde{L}$ is indeed self-adjoint, enabling us to apply the Perron-Frobenius theory. The same reasoning applies for the environmental process $(\zeta,\alpha)$: If $(\tilde{\zeta},\tilde{\alpha})$ is the Markov process with symmetric generator
\begin{align*}
\tilde{L}_{\text{EP}}f(\eta,i):=L_{\text{SE}}f(\cdot,i)(\eta)+\tilde{Q}f(\eta,\cdot)(i)+\sum_{y\sim 0}i\kappa(f(\tau_y\eta,i)-f(\eta,i))
\end{align*}
for test functions $f\colon\{0,1\}^{\mathbb{Z}^d}\times\{0,1\}\to\mathbb{R}$, then
\begin{align}\label{rndichte2}
\frac{\d\mathbb{P}^{(\zeta,\alpha)}_{\nu,1}}{\d\tilde{\mathbb{P}}^{(\tilde{\zeta},\tilde{\alpha})}_{\nu,1}}\Big|_{\mathcal{F}_t}=\exp\left(\sqrt{s_0s_1}t-s_0\tilde{L}_t(0)-s_1\tilde{L}_t(1)\right),
\end{align}
which we will utilize later in the proofs. 
\section{Survival Probability without Dormancy}
This section is devoted to the proof of Theorem 1.1, which established the long-time asymptotics of the survival probability without the dormancy mechanism and therefore without the switching Markov chain $\alpha$. The proof is structured into two lemmas, which establish the upper and lower bound, respectively. 
\begin{lemma}[Upper bound without switching]
For all $\gamma\in(-\infty, 0)$,
\begin{align}\label{lemma31}
\log\mathbb{E}_0^X\mathbb{E}^{\xi}_\nu\left[\exp\left(\gamma\int_0^t\xi(X(s),s)\,\d s\right)\right]\leq \left\{\begin{array}{ll}\displaystyle-4p\sqrt{\frac{\rho}{\pi}}\sqrt{t}(1+o(1)), &d=1,\\[13pt]\displaystyle-4p\rho\pi\frac{t}{\log(t)}(1+o(1)), &d=2,\\[13pt]\displaystyle-\frac{\rho p}{\frac{\rho}{|\gamma|}+G_d(0)}t(1+o(1)), &d\geq 3,\end{array}\right.
\end{align}
as $t\to\infty$, where we write $G_d(0)$ for the Green's function of a simple symmetric random walk with jump rate $2d$ in $0$.
\end{lemma}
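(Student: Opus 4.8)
The plan is to split according to dimension: for $d\ge 3$ I would run a spectral/variational argument through the environment seen from the walker, while for $d\le 2$ the Donsker--Varadhan scale $t$ is too coarse and one has to pass to the slower scale $a_t$ of \eqref{scales} and invoke the known occupation-time large deviations for the exclusion process.

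\emph{The transient case $d\ge 3$.} Write $|\gamma|=-\gamma>0$. Recalling \eqref{zeta} (here $\alpha\equiv 1$, so there is no switching), we have $\xi(X(s),s)=\zeta(0,s)$ where $\zeta$ is the environmental process, a Markov process reversible with respect to $\nu$ with generator $L_{\text{EP}}=L_{\text{SE}}+\kappa\sum_{y\sim 0}(\tau_y-\mathrm{id})$, whence
\[
\mathbb E_0^X\mathbb E^\xi_\nu\Big[\exp\Big(\gamma\!\int_0^t\!\xi(X(s),s)\,\d s\Big)\Big]=\mathbb E^{\zeta}_\nu\Big[\exp\Big(-|\gamma|\!\int_0^t\! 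V(\zeta_s)\,\d s\Big)\Big],\qquad V(\eta):=\eta(0).
\]
By Feynman--Kac the right-hand side equals $\langle \mathbf 1,e^{t(L_{\text{EP}}-|\gamma|V)}\mathbf 1\rangle_{L^2(\nu)}$, and since $L_{\text{EP}}-|\gamma|V$ is self-adjoint and $\le 0$, the spectral theorem (together with $\|\mathbf 1\|_2=1$) bounds this by $e^{-\Lambda t}$, where
\[
\Lambda=\inf_{\|f\|_{2}=1}\Big(\langle -L_{\text{EP}}f,f\rangle+|\gamma|\!\int\!\nu(\d\eta)\,\eta(0)f(\eta)^2\Big).
\]

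Next I would discard the non-negative migration part of the Dirichlet form, so that $\langle -L_{\text{EP}}f,f\rangle\ge \mathcal D(f)$ (this is why the resulting bound will not see $\kappa$), put $g=f^2$, and observe that for fixed $x:=\int \eta(0)\,g\,\d\nu\in[0,1]$ one has $\mathcal D(\sqrt g)\ge I_\xi(x)$ directly from the definition \eqref{Ixi}, since $g\nu$ lies in every ball around itself. This reduces matters to two deterministic facts,
\[
I_\xi(x)\ \ge\ \frac{\rho}{G_d(0)}\big(\sqrt p-\sqrt x\big)^2\quad(x\in[0,1]),\qquad\text{and}\qquad \inf_{x\in[0,1]}\!\Big(\tfrac{\rho}{G_d(0)}\big(\sqrt p-\sqrt x\big)^2+|\gamma|x\Big)=\frac{\rho p}{\tfrac{\rho}{|\gamma|}+G_d(0)},
\]
the second being a one-variable minimisation (optimum at $\sqrt x=\rho\sqrt p/(\rho+|\gamma|G_d(0))$) and the first a capacity-type lower bound on the exclusion occupation-time rate function in the spirit of \cite{landim}. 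Together they give $\Lambda\ge \rho p/(\tfrac{\rho}{|\gamma|}+G_d(0))$ and hence \eqref{lemma31} for $d\ge 3$.

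\emph{The recurrent cases $d\in\{1,2\}$.} Here $I_\xi\equiv 0$ and the spectral bound is vacuous, so one works on scale $a_t$. Choose $\epsilon_t$ with $\epsilon_t/a_t\to\infty$ and $\epsilon_t/t\to 0$. Splitting the expectation over $\{\int_0^t\xi(X(s),s)\,\d s\le\epsilon_t\}$ and its complement,
\[
\mathbb E_0^X\mathbb E^\xi_\nu\Big[\exp\Big(\gamma\!\int_0^t\!\xi(X(s),s)\,\d s\Big)\Big]\ \le\ \big(\mathbb P_0^X\otimes\mathbb P^\xi_\nu\big)\Big(\int_0^t\!\xi(X(s),s)\,\d s\le\epsilon_t\Big)+e^{-|\gamma|\epsilon_t},
\]
and the last term is $o(e^{-Ca_t})$ for every $C$ since $\epsilon_t\gg a_t$. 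For the first term I would use that in $d\le 2$ the walk $X$ is recurrent with local time at the origin of order $a_t$, so that forcing $\int_0^t\xi(X(s),s)\,\d s$ to be $o(t)$ requires the exclusion process to keep a region around the origin essentially vacant while $X$ remains nearby; by the scale-$a_t$ large-deviation/clumping estimates for symmetric exclusion (\cite{landim,arratia}, in the spirit of \cite{exclusion,shafigh}), such an event has probability at most $\exp(-c_d^\ast a_t(1+o(1)))$ with $c_1^\ast=4p\sqrt{\rho/\pi}$ and $c_2^\ast=4p\rho\pi$, which is exactly \eqref{lemma31} in these dimensions.

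\emph{Main obstacle.} The substance is concentrated in two places. For $d\ge 3$ it is the estimate $I_\xi(x)\ge \tfrac{\rho}{G_d(0)}(\sqrt p-\sqrt x)^2$: morally, depleting the origin forces a particle to escape through the transient lattice, a cost governed by $1/G_d(0)$, and the variational characterisation of $I_\xi$ must be controlled to turn this heuristic into an inequality valid for all densities $x$. For $d\le 2$ the delicate point is pinning down the constants $c_d^\ast$ on the scale $a_t$ while showing that the diffusive spreading of $X$ cannot cheapen the leading-order cost — i.e.\ that the optimal strategy is for $X$ to sit still and let the exclusion process clump around it.
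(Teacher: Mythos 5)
Your outline takes a genuinely different route from the paper, but in both dimension regimes the entire difficulty is concentrated in a step that you assert rather than prove, so as it stands there is a real gap. For $d\ge 3$, the spectral reduction through the environmental process and the one-variable minimisation are fine, but everything hinges on the inequality $I_\xi(x)\ge \frac{\rho}{G_d(0)}(\sqrt p-\sqrt x)^2$, which you yourself flag as the ``main obstacle'' and for which you offer no proof and no precise reference; this is not a routine consequence of the variational formula \eqref{Ixi}, and without it the argument produces no explicit constant. For $d\in\{1,2\}$, after the (correct) splitting on $\{\int_0^t\xi(X(s),s)\,\d s\le\epsilon_t\}$, the claim that the remaining probability is at most $\exp(-c_d^\ast a_t(1+o(1)))$ with $c_1^\ast=4p\sqrt{\rho/\pi}$ and $c_2^\ast=4p\rho\pi$ \emph{is} the lemma; the clumping estimate you cite (\cite[Lemma 3.1.1]{exclusion}) is a \emph{lower} bound on the probability of keeping a box frozen, not an upper bound on the probability that a moving walker accumulates little occupation time, and the assertion that the walker gains nothing by moving (a Pascal-principle statement) is exactly the nontrivial input you would need to establish.

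The paper avoids both issues by a two-step reduction valid in all dimensions at once. First, for each fixed trajectory of $X$ the functional $K(z,s)=\gamma\delta_{X(s)}(z)$ is nonpositive, so Arratia's comparison inequality \eqref{comparison} dominates the exclusion expectation by the same expectation for a field $\hat\xi$ of \emph{independent} random walks started from $\nu$. Second, integrating out the independent walks factorises the expectation into $\prod_y(1-p+p\,v_X(y,t))\le\exp(p\sum_y(v_X(y,t)-1))$, which is precisely the annealed survival probability of $X$ among a Poisson system of moving traps; the asymptotics of that quantity, including the fact that it is maximised by the immobile walk $\kappa=0$ and the constants $4p\sqrt{\rho/\pi}$, $4p\rho\pi$ and $\rho p/(\rho/|\gamma|+G_d(0))$, are imported wholesale from \cite{drewitz}. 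If you want to salvage your approach, you would need to either prove the capacity-type lower bound on $I_\xi$ (for $d\ge3$) and a genuine scale-$a_t$ upper bound with sharp constant together with a Pascal principle (for $d\le2$), or simply adopt the comparison-inequality reduction, which is where the paper's proof actually lives.
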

\begin{proof}
The proof relies on a comparison inequality between the simple exclusion process and a system of independent random walks, as established in \cite{exclusion}. More precisely, let $\hat{\xi}=(\hat{\xi}(x,t))_{x\in\mathbb{Z}^d,t\geq 0}$ represent the configurations generated by a collection of independent simple symmetric random walks with jump rate $2d\kappa$, where $\hat{\xi}(0)$ is distributed according to $nu$. Then, \cite[Proposition 1.2.1]{exclusion} asserts that for all $K:\mathbb{Z}^d\times[0,\infty)\to\mathbb{R}$ such that either $K\geq 0$ oder $K\leq 0$, and all $t\geq 0$ such that $\sum_{z\in\mathbb{Z}^d}\int_0^t|K(z,s)|\,\d s<\infty$ and all $\eta\in\{0,1\}^{\mathbb{Z}^d}$,
\begin{align}\label{comparison}
\mathbb{E}_\nu^{\xi}\left[\exp\left(\sum_{z\in\mathbb{Z}^d}\int_0^tK(z,s)\xi(z,s)\,\d s\right)\right]\leq \mathbb{E}_\nu^{\hat{\xi}}\left[\exp\left(\sum_{z\in\mathbb{Z}^d}\int_0^tK(z,s)\hat{\xi}(z,s)\,\d s\right)\right].
\end{align}
Now, fix a realization of $X$ with $N_t$ jumps up to time $t$ and let $\tau_1,\tau_2,\cdots,\tau_{N_t}$ be the corresponding jump times. Further, let $x_k$, $k=0,\cdots,N)t$ denote the corresponding values on each interval $[\tau_k,\tau_{k+1})$, where we set $x_0=0$ and $\tau_0=0$. Define the function $K:\mathbb{Z}^d\times[0,t]\to\mathbb{R}$ as
\begin{align*}
K(z,s)=\gamma\delta_{x_k}(z)\quad \text{for all } s\in[\tau_k,\tau_{k+1}), k=1,\cdots,N_t.
\end{align*}
Then, $K\leq 0$ and
\begin{align*}
\sum_{z\in\mathbb{Z}^d}\int_0^t|K(z,s)|\,\d s=|\gamma |t<\infty
\end{align*}
for all $t\geq 0$. Thus, by \eqref{comparison},
\begin{align*}
\mathbb{E}_\nu^\xi\left[\exp\left(\gamma\int_0^t\xi(X(s),s)\,\d s\right)\right]&=\mathbb{E}_\nu^\xi\left[\exp\left(\sum_{z\in\mathbb{Z}^d}\int_0^tK(z,s)\xi(z,s)\,\d s\right)\right]\\&\leq \mathbb{E}_\nu^{\hat{\xi}}\left[\exp\left(\sum_{z\in\mathbb{Z}^d}\int_0^tK(z,s)\hat{\xi}(z,s)\,\d s\right)\right]
\\&=\mathbb{E}_\nu^{\hat{\xi}}\left[\exp\left(\gamma\int_0^t\hat{\xi}(X(s),s)\,\d s\right)\right].
\end{align*}
For each configuration $\eta\subseteq\{0,1\}^{\mathbb{Z}^d}$, let $A_{\eta}:=\{x\in\mathbb{Z}^d : \eta(x)=1\}$. Further, let $Y_y$ denote a random walk with jump rate $2d\rho$ starting from $y\in\mathbb{Z}^d$. We now integrate over the Bernoulli system of independent random walks to obtain
\begin{align*}
\mathbb{E}_\nu^{\hat{\xi}}\left[\exp\left(\gamma\int_0^t\hat{\xi}(X(s),s)\,\d s\right)\right]&=\int_{\{0,1\}^{\mathbb{Z}^d}}\mathbb{E}_{\eta}^{\hat{\xi}}\left[\exp\left(\gamma\int_0^t\sum_{y\in A_\eta}\delta_{X(s)}(Y_y(s))\,\d s\right)\right]\nu(\d\eta)
\\&=\int_{\{0,1\}^{\mathbb{Z}^d}}\prod_{y\in A_\eta}v_X(y,t)\nu(\d\eta),
\end{align*}
where we used the independence of the random walks and abbreviated
\begin{align*}
v_X(y,t):=\mathbb{E}_y^{Y_y}\left[\exp\left(\gamma\int_0^t\delta_{X(s)}(Y_y(s))\d s\right)\right].
\end{align*}
Using the fact that $\nu$ is a Bernoulli product measure, we obtain
\begin{align}\label{ineqX}
\int_{\{0,1\}^{\mathbb{Z}^d}}\prod_{y\in A_\eta}v_X(y,t)\nu(\d\eta)=\prod_{y\in\mathbb{Z}^d}p v_X(y,t)\leq \exp\left(p\sum_{y\in\mathbb{Z}^d}(v_X(y,t)-1)\right).
\end{align}
In \cite{drewitz} it was shown that the right hand-side of \eqref{ineqX} is nothing but the survival probability of $X$ among a Poisson system of moving traps, which is maximized for $\kappa=0$, corresponding to an immobile random walk, and follows the same asymptotics as on the right hand-side of \eqref{lemma31}.
\end{proof}
Next, we prove the lower bound in Theorem 1.1:
\begin{lemma}[Lower bound without switching in $d=1,2$]\label{lowerwithout}
For all $\gamma\in(-\infty,0)$,
\begin{align*}
\log\mathbb{E}_0^X\mathbb{E}_\nu^\xi\left[\exp\left(\gamma\int_0^t\xi(X(s),s)\,\d s\right)\right]\geq \left\{\begin{array}{ll}\displaystyle 4\log(1-p)\sqrt{\frac{\rho}{\pi}}\sqrt{t}(1+o(1)), &d=1,\\[13pt]\displaystyle 4\log(1-p)\rho\pi\frac{t}{\log(t)}(1+o(1)), &d=2,\end{array}\right.
\end{align*}
as $t\to\infty$.
\end{lemma}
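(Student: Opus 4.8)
The plan is to keep only the event on which the Feynman--Kac exponent vanishes. Since $\gamma<0$ and $\int_0^t\xi(X(s),s)\,\d s\ge0$,
\begin{align}\label{plan1}
\mathbb{E}_0^X\mathbb{E}_\nu^\xi\Big[\exp\Big(\gamma\int_0^t\xi(X(s),s)\,\d s\Big)\Big]\ \ge\ \mathbb{P}_0^X\otimes\mathbb{P}_\nu^\xi\big(\xi(X(s),s)=0\ \text{for all }s\le t\big),
\end{align}
and the right-hand side I would estimate through the stirring (graphical) representation of the symmetric exclusion (cf.\ \cite{liggett}): $\xi$ is built from i.i.d.\ rate-$\rho$ Poisson clocks on the edges of $\mathbb{Z}^d$, a ringing clock on $\{x,y\}$ exchanging the contents of $x$ and $y$, together with an independent $\xi(\cdot,0)\sim\nu$. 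Labelling each site $w$ by the value $\xi(w,0)\in\{0,1\}$ it initially carries, every label performs a continuous-time simple random walk $(Z_w(s))_{s\ge0}$ of jump rate $2d\rho$ with $Z_w(0)=w$; the walk $X$ is independent of all clocks and of $\xi(\cdot,0)$, and $\xi(X(s),s)$ equals the value $\xi(w,0)$ of the unique label with $Z_w(s)=X(s)$. Hence, writing $D_t:=\{w\in\mathbb{Z}^d:\ Z_w(s)=X(s)\ \text{for some }s\le t\}$ for the set of labels whose trajectory meets that of $X$, the event in \eqref{plan1} is $\{\xi(w,0)=0\ \forall w\in D_t\}$, so conditioning on $D_t$ (a function of the clocks and of $X$) and using that $\xi(\cdot,0)$ is Bernoulli$(p)$ and independent of it,
\begin{align}\label{plan2}
\mathbb{E}_0^X\mathbb{E}_\nu^\xi\Big[\exp\Big(\gamma\int_0^t\xi(X(s),s)\,\d s\Big)\Big]\ \ge\ \mathbb{E}\big[(1-p)^{|D_t|}\big].
\end{align}

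The second step is to confine the walk to a diffusive box, so that $|D_t|$ is governed by the slow exclusion walks alone. Fix $\delta>0$, put $B_\delta^t:=\{x:|x|\le\delta\sqrt t\}$, restrict \eqref{plan2} to $\{X(s)\in B_\delta^t\ \forall s\le t\}$ and apply Jensen's inequality in the clock randomness:
\begin{align*}
\mathbb{E}\big[(1-p)^{|D_t|}\big]\ \ge\ \mathbb{E}_0^X\Big[\1\{X(s)\in B_\delta^t\ \forall s\le t\}\,\exp\big(-|\log(1-p)|\,\mathbb{E}[\,|D_t|\mid X\,]\big)\Big].
\end{align*}
For a deterministic path $X$ inside $B_\delta^t$ one has, using the marginal identity $Z_w\overset{d}{=}w+Z_0$ (as processes) and re-indexing the sum over $w$,
\begin{align*}
\mathbb{E}[\,|D_t|\mid X\,]=\sum_w\mathbb{P}\big(Z_w\text{ meets }X\big)=\mathbb{E}\big[\mathrm{range}_{[0,t]}(Z_0-X)\big]\ \le\ \mathbb{E}\big[\mathrm{range}_{[0,t]}(Z_0)\big]+2\delta\sqrt t+1,
\end{align*}
uniformly in such $X$. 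Now $\mathbb{E}\big[\mathrm{range}_{[0,t]}(Z_0)\big]$ is the classical expected range of a rate-$2d\rho$ walk, namely $\big(4\sqrt{\rho/\pi}+o(1)\big)\sqrt t$ for $d=1$ and $\big(4\pi\rho+o(1)\big)\,t/\log t$ for $d=2$ (the range of one-dimensional simple random walk, resp.\ the Dvoretzky--Erd\H{o}s asymptotics on $\mathbb{Z}^2$), while $\mathbb{P}_0^X(X(s)\in B_\delta^t\ \forall s\le t)$ is bounded below by a strictly positive $t$-independent constant (Brownian scaling). Combining, for $d=1$,
\begin{align*}
\log\mathbb{E}_0^X\mathbb{E}_\nu^\xi\Big[\exp\Big(\gamma\int_0^t\xi(X(s),s)\,\d s\Big)\Big]\ \ge\ -\big(4\sqrt{\rho/\pi}+2\delta\big)|\log(1-p)|\sqrt t\,(1+o(1)),
\end{align*}
and the analogous estimate with $t/\log t$ and $4\pi\rho$ in $d=2$; letting $\delta\downarrow0$ and recalling $|\log(1-p)|=-\log(1-p)$ gives the claim.

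The hard part is the uniform bound $\mathbb{E}[\,|D_t|\mid X\,]\le\mathbb{E}[\mathrm{range}(Z_0)]+O(\delta\sqrt t)$ with the correct leading constant: what must appear is the range of the \emph{slow} labelling walk $Z_0$ (jump rate $2d\rho$), not of the faster difference walk $Z_0-X$, and this is exactly why $X$ has to be confined to a box of diffusive size $\delta\sqrt t$ — a bounded box would cost $\exp(-\Theta(t))$, while keeping $X$ in its typical range $\Theta(\sqrt t)$ without the $\delta\downarrow0$ device would not yield the clean constant; one also has to check that the range asymptotics (in particular Dvoretzky--Erd\H{o}s) and the Gaussian confinement estimate are applied uniformly as $t\to\infty$ with $\rho,\kappa$ fixed. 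Note that this route is deliberately lossy relative to the complementary upper bound \eqref{lemma31}: every label that merely grazes the trajectory of $X$ near the boundary of $D_t$ is nonetheless forced to carry value $0$ (cost $1-p$ each), whereas the sharp mechanism would let a few occupied labels shadow $X$ without ever coinciding with it; this is precisely what turns the exponent $p$ of \eqref{lemma31} into $-\log(1-p)$ here, in agreement with $K_{p,1}\in[p,-\log(1-p)]$.
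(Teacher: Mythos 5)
Your first two steps (restricting to the event $\{\xi(X(s),s)=0\ \forall s\le t\}$, passing to the stirring representation, and applying Jensen to get $(1-p)^{\mathbb{E}[|D_t|\mid X]}$) are sound and match the paper's strategy in spirit; the paper instead forces an entire box $Q_t$ around the origin to be vacant for all time and counts the graphical paths hitting $[0,t]\times Q_t$, but after Jensen both routes reduce to an expected-range computation. In $d=1$ your argument goes through: the range of the nearest-neighbour difference path $Z_0-X$ is determined by its maximum and minimum, so confinement of $X$ to $[-\delta\sqrt t,\delta\sqrt t]$ really does give $|\mathrm{Range}(Z_0-X)|\le|\mathrm{Range}(Z_0)|+2\delta\sqrt t$, the confinement probability is a $t$-independent constant, and $\delta\downarrow0$ recovers the constant $4\sqrt{\rho/\pi}$.

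The gap is in $d=2$, precisely at the step you flag as "the hard part". The bound $\mathbb{E}[|D_t|\mid X]\le\mathbb{E}[\mathrm{range}(Z_0)]+2\delta\sqrt t+1$ is asserted uniformly in confined $X$, but the max--min argument that proves it in $d=1$ has no two-dimensional analogue: the best set-theoretic inclusion gives $\mathrm{Range}(Z_0-X)\subseteq\mathrm{Range}(Z_0)\oplus(-B^t_\delta)$, whose cardinality carries an additive correction of order $|B^t_\delta|\asymp\delta^2t$, not $\delta\sqrt t$ --- and $\delta^2t\gg t/\log t$ for every fixed $\delta>0$, so the error swamps the main term and cannot be removed by letting $\delta\downarrow0$ after $t\to\infty$. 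Worse, for a path $X$ making its typical $\Theta(t)$ jumps inside a diffusive box there is no reason for $\mathbb{E}[\mathrm{range}(Z_0-X)]$ to be asymptotic to $\mathbb{E}[\mathrm{range}(Z_0)]\sim4\pi\rho t/\log t$ rather than to the range of the faster difference walk; diffusive confinement does not slow $X$ down. The paper resolves exactly this point by taking the confining box \emph{sub}-diffusive ($r_t=\sqrt{t/\log t}$ in $d=1$, $r_t=\log t$ in $d=2$) and proving
\begin{align*}
\sum_{x\notin Q_t}\mathbb{P}_x(\tau_{Q_t}\le t)\ \le\ \frac{\sum_{x\in\mathbb{Z}^d}\mathbb{P}_x(\tau_0\le(1+\varepsilon)t)}{\inf_{z\in\partial Q_t}\mathbb{P}_z(\tau_0\le\varepsilon t)},
\end{align*}
where the denominator tends to $1$ only because $Q_t$ is so small that a rate-$2d\rho$ walk started on $\partial Q_t$ hits the origin within time $\varepsilon t$ with high probability; this is what legitimately replaces the count of paths hitting the cylinder by the expected range of a single slow walk. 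You would need to import this device (at the price that the confinement probability becomes polynomially small rather than constant, which is still harmless on scale $t/\log t$); as written, your $d=2$ estimate does not hold.
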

\begin{proof}
The proof if based on the \emph{graphical representation} of the exclusion process, which we will denote by $\mathcal{G}$ in the following. In $\mathcal{G}$, space is drawn sidewards, time upwards,
and, at rate $\rho$, we place links between each neighbouring sites $x,y\in\mathbb{Z}^d$. The configuration of the exclusion process at time $t$ is then obtained from the one at time $0$ of $\mathcal{G}$ by transporting the local states
along paths moving upwards with time and sidewards along links:
\begin{figure}[H]
\centering
\includegraphics[scale=0.8]{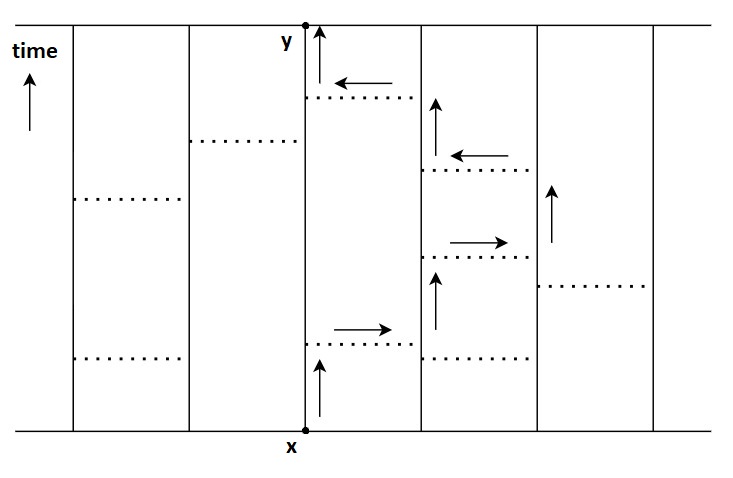}\caption{Graphical representation of the simple exclusion. Arrows represent a path from $x$ to $y$ through the dashed lines, representing links between neighbours evolving with time.}
\end{figure} 
Now, let $Q_t:=[-r_t,r_t]^d$, where we choose $r_t=\sqrt{\frac{t}{\log(t)}}$ for $d=1$ and $r_t=\log(t)$ for $d=2$. The strategy is to impose a condition on the exclusion process to create a vacant area $Q_t$ around zero up to time $t$ and to constrain the random walk $X$ to stay within $Q_t$ throughout the interval $[0,t]$. Define the events
\begin{align*}
A_t:=\left\{\xi(x,s)=0\forall x\in Q_t\forall s\in[0,t]\right\} \quad\text{and}\quad  B_t:=\left\{X(s)\in Q_t\forall s\in[0,t]\right\}.
\end{align*}
Then,
\begin{align*}
\mathbb{E}_0^X\mathbb{E}_\nu^\xi\left[\exp\left(\gamma\int_0^t\xi(X(s),s)\,\d s\right)\right]\geq \mathbb{P}(A_t)\mathbb{P}(B_t).
\end{align*}
In \cite[Lemma 2.1]{drewitz} it was shown that there exist some constant $\beta>0$ such that
\begin{align}\label{asB}
\log\mathbb{P}(B_t)\geq \log(\beta)\frac{t}{r_t^2}=\left\{\begin{array}{ll}\log(\beta)\log(t), &d=1,\\\log(\beta)\frac{t}{\log(t)^2}, &d=2.
\end{array}\right.
\end{align}
In order to determine $\mathbb{P}(A_t)$, we define
\begin{align*}
H_t^{Q_t}:=\left\{x\in\mathbb{Z}^d : \exists \text{ path in } \mathcal{G} \text{ from } (0,x) \text{ to } [0,t]\times Q_t\right\} 
\end{align*}
and $H_t^0$ analogously with $Q_t$ replaced by $\{0\}$. Further, denote by $\mathcal{P}$ and $\mathcal{E}$ the probability and expectation, respectively, with respect to $\mathcal{G}$. According to \cite{arratia},
\begin{align}\label{asA}
\mathbb{P}(A_t)=(\mathcal{P}\otimes\mathbb{P}_\nu)(H_t^{Q_t}\subseteq V_0)=\mathcal{E}[(1-p)^{|H_t^{Q_t}|}]\geq (1-p)^{\mathcal{E}[|H_t^{Q_t}|]}
\end{align}
using Jensen's inequality, where $V_0=\left\{x\in\mathbb{Z}^d: \xi(x,0)=0\right\}$ denotes the initial set of the vacancies in the exclusion process. Let $\tau_x$ and $\tau_{Q_t}$ denote the hitting time of the point $x$ and the set $Q_t$, respectively, by a simple random walk with jump rate $2d\rho$, and write $\mathbb{P}_y$ for the probability with respect to this walk when starting in $y$. Then,
\begin{align*}
\mathcal{E}[|H_t^{Q_t}|]&=\sum_{x\in\mathbb{Z}^d}\mathcal{P}(\exists \text{ path in } \mathcal{G} \text{ from } (0,x) \text{ to } [0,t]\times Q_t)
\\&=\sum_{x\in\mathbb{Z}^d}\mathbb{P}_x(\tau_{Q_t}\leq t)=|Q_t|+\sum_{x\notin Q_t}\mathbb{P}_x(\tau_{Q_t}\leq t).
\end{align*}
On the other hand, for every $\varepsilon>0$ we have
\begin{align*}
\sum_{x\in\mathbb{Z}^d}\mathbb{P}_x(\tau_0\leq t+\varepsilon t)&\geq \sum_{x\in\mathbb{Z}^d}\mathbb{P}_x(\tau_0\leq t+\varepsilon t, \tau_{Q_t}\leq t)\geq \sum_{x\notin Q_t}\mathbb{P}_x(\tau_0\leq t+\varepsilon+t, \tau_{Q_t}\leq t)
\\&\geq \inf_{z\in\partial Q_t}\mathbb{P}_z(\tau_0\leq \varepsilon t)\sum_{x\notin Q_t}\mathbb{P}_x(\tau_{Q_t}\leq t),
\end{align*}
where in the last step we used the Markov property. Hence,
\begin{align*}
\sum_{x\notin Q_t}\mathbb{P}_x(\tau_{Q_t}\leq t)\leq \frac{\sum_{x\in\mathbb{Z}^d}\mathbb{P}_x(\tau_0\leq t+\varepsilon t)}{\inf_{z\in\partial Q_t}\mathbb{P}_z(\tau_0\leq \varepsilon t)}
\end{align*}
and therefore
\begin{align}\label{ineqh}
\mathcal{E}[|H_t^{Q_t}|]\leq |Q_t|+\frac{\sum_{x\in\mathbb{Z}^d}\mathbb{P}_x(\tau_0\leq t+\varepsilon t)}{\inf_{z\in\partial Q_t}\mathbb{P}_z(\tau_0\leq \varepsilon t)}.
\end{align}
In the proof of \cite[Lemma 2.1]{drewitz} it was shown that ${\inf_{z\in\partial Q_t}\mathbb{P}_z(\tau_0\leq \varepsilon t)}\to 1$ as $t\to\infty$ for both choices of $r_t\in\{\sqrt{t/\log(t)}, \log(t)\}$ in the respective dimension. Thus, for $t\to\infty$ and $\varepsilon\to 0$ we can replace the right hand-side of \eqref{ineqh} by
\begin{align}\label{rhs}
|Q_t|+\sum_{x\in\mathbb{Z}^d}\mathbb{P}_x(\tau_0\leq t)=|Q_t|+\sum_{x\in\mathbb{Z}^d}\mathbb{P}_0(\tau_x\leq t)=|Q_t|+\mathbb{E}_0[R_t],
\end{align}
where we denote by $R_t$ the range of a simple random walk with jump rate $2d\rho$, which is known (cf.\,\cite{arratia}) to have the asymptotics
\begin{align}\label{Rt}
\mathbb{E}_0[R_t]=\left\{\begin{array}{ll}\displaystyle 4\sqrt{\frac{\rho}{\pi}}\sqrt{t}(1+o(1)), &d=1,\\[12pt]\displaystyle 4\rho\pi\frac{t}{\log(t)}(1+o(1)), &d=2,
\end{array}\right.
\end{align}
as $t\to\infty$. Combining \eqref{asA}, \eqref{rhs} and \eqref{Rt} we deduce that
\begin{align*}
\log\mathbb{P}(A_t)\geq \log(1-p)^{|Q_t|+\mathbb{E}_0[R_t]}=\left\{\begin{array}{ll}\displaystyle\log(1-p)\left(1+2\sqrt{\frac{t}{\log(t)}}+4\sqrt{\frac{\rho}{\pi}}\sqrt{t}(1+o(1))\right), &d=1,\\[12pt]\displaystyle \log(1-p)\left(1+2\log(t)+4\rho\pi\frac{t}{\log(t)}(1+o(1))\right), &d=2.
\end{array}\right.
\end{align*}
Hence,
\begin{align*}
\lim_{t\to\infty}\frac{1}{\sqrt{t}}\log\mathbb{P}(A_t)\geq 4\log(1-p)\sqrt{\frac{\rho}{\pi}}, \qquad d=1,
\end{align*}
and
\begin{align*}
\lim_{t\to\infty}\frac{\log(t)}{t}\log\mathbb{P}(A_t)\geq 4\log(1-p)\rho\pi, \qquad d=2,
\end{align*}
which proves the lemma. 
\end{proof}

\section{Survival probability with Dormancy}
In this section, we establish Theorem 1.2 under the assumption of the stochastic dormancy mechanism. In particular, we assume that $s_0,s_1>0$. Similar to the proof of Theorem 1.1, the argument is divided into two lemmas, which provide the upper and lower bounds, respectively. While the proof ideas are analogous, they additionally account for the behaviour of the switching mechanism $\alpha$.
\begin{lemma}[Upper bound]
For all $\gamma\in(-\infty, 0)$,
\begin{align}\label{ass41}
\log\mathbb{E}_{(0,1)}^{(X,\alpha)}\mathbb{E}_\nu^\xi\left[\displaystyle\e^{\displaystyle\gamma\int_0^t\alpha(s)\xi(X(s),s)\,\d s}\right]\leq \left\{\begin{array}{ll}\displaystyle-4p\sqrt{\frac{s_0\rho}{(s_0+s_1)\pi}}\sqrt{t}(1+o(1)), &d=1,\\[13pt]\displaystyle-4p\frac{s_0\rho\pi}{s_0+s_1}\frac{t}{\log(t)}(1+o(1)), &d=2,\\[13pt]\displaystyle-\tilde{\lambda}_{d,\gamma,\rho,p,s_0,s_1}t(1+o(1)), &d\geq 3,\end{array}\right.
\end{align}
as $t\to\infty$, for some constant $\tilde{\lambda}_{d,\gamma,\rho,p,s_0,s_1}$ satisfying 
\begin{align*}
\tilde{\lambda}_{d,\gamma,\rho,p,s_0,s_1}\geq \inf_{a\in[0,1]}\left\{s_0-\sqrt{s_0s_1}+\left(s_1-s_0+\frac{p\rho}{\frac{\rho}{|\gamma|}+G_d(0)}\right)a-2\sqrt{s_0s_1a(1-a)}\right\}.
\end{align*}

\end{lemma}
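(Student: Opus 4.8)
The plan is to reduce the problem with switching to the problem without switching (Lemma~3.1) by conditioning on the local time $L_t(1)=\int_0^t\alpha(s)\,\d s$ of $\alpha$ in the active state, and then to apply the large deviation principle \eqref{I} for $(\tfrac1tL_t(1))_{t>0}$. The first step is to observe that, conditionally on the whole trajectory of $\alpha$, the random walk $X$ only moves while $\alpha=1$; writing $\sigma(s)$ for the (random) amount of active time accumulated up to $s$, a time change turns $(X(s))_{s\le t}$ into a genuine simple random walk run for total time $L_t(1)$. Since $\xi$ is autonomous and the exponent $\gamma\int_0^t\alpha(s)\xi(X(s),s)\,\d s$ only accumulates mass during active periods, I would bound the $\xi$-expectation, conditionally on $\alpha$, by the comparison inequality \eqref{comparison} with the independent-random-walk system $\hat\xi$ exactly as in the proof of Lemma~3.1 — the key point being that the set $K\le 0$ has the right sign regardless of how the active periods are interleaved, and $\sum_z\int_0^t|K|\le|\gamma|t<\infty$ still holds. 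This yields, conditionally on $\alpha$ with $L_t(1)=at$,
\begin{align*}
\mathbb{E}_\nu^\xi\!\left[\e^{\gamma\int_0^t\alpha(s)\xi(X(s),s)\,\d s}\right]\le \exp\!\left(p\sum_{y\in\mathbb{Z}^d}(v_X(y,t)-1)\right),
\end{align*}
and the right-hand side is again the survival probability of the (time-changed) walk among a Poisson field of moving traps, now run effectively for active time $\approx at$.

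Next I would quantify that conditional bound using the results quoted for the no-dormancy case. In $d\ge 3$ the survival probability among Poisson traps for a walk active for time $at$ decays like $\exp(-\frac{p\rho}{\rho/|\gamma|+G_d(0)}\,at\,(1+o(1)))$ (this is precisely the $d\ge3$ line of \eqref{lemma31}, with $t$ replaced by the active time $at$, since the trap-killing rate only acts while the walk is active). Taking expectation over $\alpha$ and inserting the LDP \eqref{I}, Varadhan's lemma gives
\begin{align*}
\limsup_{t\to\infty}\frac1t\log \mathbb{E}_{(0,1)}^{(X,\alpha)}\mathbb{E}_\nu^\xi[\cdots]
\le -\inf_{a\in[0,1]}\left\{\tfrac{p\rho}{\rho/|\gamma|+G_d(0)}\,a+I(a)\right\},
\end{align*}
and substituting $I(a)=s_0-\sqrt{s_0s_1}+(s_1-s_0)a+(\sqrt{s_0s_1}-2\sqrt{s_0s_1a(1-a)})$ — more precisely $I(a)=-2\sqrt{s_0s_1a(1-a)}+(s_1-s_0)a+s_0$ — produces exactly the stated variational expression for $\tilde\lambda_{d,\gamma,\rho,p,s_0,s_1}$. (Strictly, Varadhan's lemma needs the integrand $a\mapsto \tfrac{p\rho}{\rho/|\gamma|+G_d(0)}a$ continuous and bounded on $[0,1]$, which it is; the $(1+o(1))$ in the $d\ge3$ asymptotics is absorbed because the rate is linear in the active time and the LDP is on scale $t$.)

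For $d\in\{1,2\}$ the sub-exponential no-dormancy asymptotics $-4p\sqrt{s_0\rho/((s_0+s_1)\pi)}\sqrt t$ and $-4p\,\tfrac{s_0\rho\pi}{s_0+s_1}\tfrac{t}{\log t}$ are obtained on the scales $a_t$ of \eqref{scales}, which are smaller than $t$; there the relevant fact is Lemma~2.4 / equation \eqref{lemma2.4}, which says that on any scale $o(t)$ the active-time fraction $\tfrac1tL_t(1)$ is pinned to its mean $\tfrac{s_0}{s_0+s_1}$. So conditionally the walk is active for time $\tfrac{s_0}{s_0+s_1}t(1+o(1))$, the Range asymptotics \eqref{Rt} get multiplied by $\tfrac{s_0}{s_0+s_1}$ inside the square root resp. linearly, and the upper bound of Lemma~3.1 transfers with $\rho$ replaced by $\tfrac{s_0}{s_0+s_1}\rho$, giving the first two lines of \eqref{ass41}. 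The main obstacle I anticipate is making the time-change/conditioning argument fully rigorous when combined with the comparison inequality \eqref{comparison}: one must check that, after conditioning on $\alpha$, the configuration process $\xi$ seen along the frozen-and-moving trajectory of $X$ still satisfies the hypotheses of \cite[Proposition~1.2.1]{exclusion} (it does, since $\xi$ is independent of $\alpha$ and the test weight $K$ remains single-signed), and that the $d=1,2$ estimates survive the $o(t)$-fluctuations of $L_t(1)$ — this is where \eqref{lemma2.4} is essential, since a deviation of $L_t(1)$ of order $a_t$ would already be enough to shift the leading constant, and the LDP guarantees such deviations are exponentially costly on scale $t\gg a_t$.
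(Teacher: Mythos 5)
Your first half coincides with the paper's proof: fix a realization of $(X,\alpha)$, check that $K(z,s)=\gamma\alpha(s)\delta_{x_k}(z)$ is single-signed with $\sum_z\int_0^t|K|\,\d s=|\gamma|L_t(1)<\infty$, apply the comparison inequality \eqref{comparison} to replace $\xi$ by the independent-walk field $\hat\xi$, and integrate the Bernoulli product measure to arrive at $\exp\bigl(p\sum_{y}(v_{(X,\alpha)}(y,t)-1)\bigr)$. At that point the paper does not perform any time change or LDP computation itself: it identifies the resulting expression as the annealed survival probability of the \emph{switching} random walk among a Poisson system of moving traps and quotes its asymptotics \eqref{asy3-} from \cite{shafigh}, where the integration over $\alpha$ (Varadhan's lemma in $d\ge3$, the pinning statement \eqref{lemma2.4} in $d=1,2$) is carried out.

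The step where you deviate contains a genuine gap. You claim that, conditionally on $\alpha$ with $L_t(1)=at$, the quantity $\exp\bigl(p\sum_y(v_{(X,\alpha)}(y,t)-1)\bigr)$ is ``the survival probability of the time-changed walk among a Poisson field of moving traps run effectively for active time $at$,'' i.e.\ the no-dormancy quantity of Lemma 3.1 with $t$ replaced by $at$. The time change is legitimate for $X$ (which is frozen while $\alpha=0$) but not for the traps: each $Y_y$ keeps moving in real time throughout the dormant intervals, so the functional $\int_{A_t}\delta_{X(s)}(Y_y(s))\,\d s$ with $A_t=\{s:\alpha(s)=1\}$ depends on the geometry of $A_t$ and not only on its Lebesgue measure $L_t(1)$. (Compare the paper's own lower-bound argument in Lemma 4.2, where the substitution $\mathbb{P}_x(\tau_{Q_t}\in A_t)=\mathbb{P}_x(\tau_{Q_t}\le L_t(1))$ is valid for a \emph{first hitting time} but is derived carefully via the graphical representation; no such identity is available for the exponential functional $v_{(X,\alpha)}(y,t)$ itself.) So the assertion that the conditional bound equals the no-dormancy bound ``at time $at$'' needs the full analysis of the switching walk among moving traps — precisely the content of \cite{shafigh} that the paper invokes — and cannot be obtained by relabelling time. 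Your subsequent Varadhan/\eqref{lemma2.4} computation and the resulting variational formula for $\tilde\lambda$ are the correct final ingredients, but they rest on this unjustified reduction.
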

\begin{proof}
The proof is similar to the proof of the upper bound without dormancy. Fix a realization of $(X,\alpha)$ and let $\tau_1,\tau_2,\cdots,\tau_{N_t}$ be the jump times of the piecewise constant function $X$ (given $\alpha$) up to time $t$ and $x_k$, $k=0,\cdots,N_t$ the corresponding values on each intervall $[\tau_k,\tau_{k+1})$ where we set $x_0=0$ and $\tau_0=0$. Define the function $K:\mathbb{Z}^d\times[0,t]\to\mathbb{R}$ as
\begin{align*}
K(z,s)=\gamma\alpha(s)\delta_{x_k}(z)\quad \text{for all } s\in[\tau_k,\tau_{k+1}), k=1,\cdots,n.
\end{align*}
Then we have $K\leq 0$ and
\begin{align*}
\sum_{z\in\mathbb{Z}^d}\int_0^t|K(z,s)|\,\d s=|\gamma |L_t(1)<\infty
\end{align*}
for all $t\geq 0$. Thus, by Lemma \ref{comparison},
\begin{align*}
\mathbb{E}_\nu^\xi\left[\exp\left(\gamma\int_0^t\alpha(s)\xi(X(s),s)\,\d s\right)\right]\leq \mathbb{E}_\nu^{\hat{\xi}}\left[\exp\left(\gamma\int_0^t\alpha(s)\hat{\xi}(X(s),s)\,\d s\right)\right].
\end{align*}
Using the same notation as in the proof of Lemma 3.1 and integrating over the Bernoulli system of independent random walks again as well as the fact that $\nu$ is a Bernoulli product measure, we obtain
\begin{align}\label{ineqXa}
\nonumber\mathbb{E}_\nu^{\hat{\xi}}\left[\exp\left(\gamma\int_0^t\alpha(s)\hat{\xi}(X(s),s)\,\d s\right)\right]&=\int_{\{0,1\}^{\mathbb{Z}^d}}\prod_{y\in A_\eta}v_{(X,\alpha)}(y,t)\nu(\d\eta)
\\&=\prod_{y\in\mathbb{Z}^d}p v_{(X,\alpha)}(y,t)\leq \exp\left(p\sum_{y\in\mathbb{Z}^d}(v_{(X,\alpha)}(y,t)-1)\right),
\end{align}
where we wrote
\begin{align*}
v_{(X,\alpha)}(y,t):=\mathbb{E}_y^{Y_y}\left[\exp\left(\gamma\int_0^t\delta_{(X(s),\alpha(s))}(Y_y(s),1)\,\d s\right)\right]
\end{align*}
for a fixed realization of $(X,\alpha)$.
In \cite{shafigh} it has been shown that the right hand-side of \eqref{ineqXa} is nothing but the survival probability of the switching random walk $X$ among a Poisson system of moving traps, which is maximized for $\kappa=0$ and follows the same asymptotics as the right hand-side of \eqref{ass41}. Since this bound holds for any realization of $(X,\alpha)$, the lemma follows.
\end{proof}
We now prove the corresponding lower bound on the survival probability. 
\begin{lemma}[Lower bound]\label{lowerboundsurvival}
For all $\gamma\in(-\infty,0)$ we have
\begin{align*}
\log\mathbb{E}_{(0,1)}^{(X,\alpha)}\mathbb{E}_\nu\left[\exp\left(\gamma\int_0^t\xi(X(s),s)\,\d s\right)\right]\geq \left\{\begin{array}{ll}\displaystyle 4\log(1-p)\sqrt{\frac{s_0\rho}{(s_0+s_1)\pi}}\sqrt{t}(1+o(1)), &d=1,\\[13pt]\displaystyle 4\log(1-p)\frac{s_0\rho\pi}{s_0+s_1}\frac{t}{\log(t)}(1+o(1)), &d=2,\end{array}\right.
\end{align*}
as $t\to\infty$.
\end{lemma}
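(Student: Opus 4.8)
The plan is to mirror the argument of Lemma~\ref{lowerwithout}: we bound the expectation from below by restricting it to an event on which the exponent $\gamma\int_0^t\alpha(s)\xi(X(s),s)\,\d s$ vanishes, obtained by forcing the exclusion to empty a neighbourhood of the origin and confining the regime-switching walk to that neighbourhood. The new feature, absent in the dormancy-free case, is that dormant individuals do not feel the environment, so it is enough to keep the walk's position vacant during its \emph{active} periods; by the behaviour of $\tfrac1tL_t(1)$ recorded in \eqref{lemma2.4}, on every scale smaller than $t$ the total active time is $\tfrac{s_0}{s_0+s_1}t(1+o(1))$, and it is exactly this reduced horizon that turns the range asymptotics \eqref{Rt} into the claimed rates (note that $\mathbb E_0[R_{cT}]=\sqrt c\,\mathbb E_0[R_T]$ in $d=1$ and $\sim c\,\mathbb E_0[R_T]$ in $d=2$, which produces the factors $\sqrt{s_0/(s_0+s_1)}$, resp.\ $s_0/(s_0+s_1)$).

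Concretely, take $Q_t=[-r_t,r_t]^d$ with $r_t=\sqrt{t/\log t}$ for $d=1$ and $r_t=\log t$ for $d=2$, exactly as in the proof of Lemma~\ref{lowerwithout}, and set $B_t=\{X(s)\in Q_t\ \forall s\le t\}$. Since $X$ moves only while active, $\mathbb P(B_t)$ dominates the corresponding probability for a genuine rate-$2d\kappa$ random walk, so \eqref{asB} still gives $\log\mathbb P(B_t)=o(a_t)$, and likewise forcing $\alpha$ to stay near its typical active fraction costs only $e^{o(a_t)}$ by \eqref{lemma2.4}. Writing $\mathcal G$ for the graphical representation of the exclusion and, in analogy with the set $H_t^{Q_t}$ of the proof of Lemma~\ref{lowerwithout},
\begin{align*}
H_t^{Q_t,\alpha}:=\big\{x\in\mathbb Z^d:\exists\ \text{path in }\mathcal G\text{ from }(0,x)\text{ to }\{(s,z):z\in Q_t,\ \alpha(s)=1\}\big\},
\end{align*}
the heart of the argument is the estimate $\mathcal E\big[|H_t^{Q_t,\alpha}|\big]\le |Q_t|+\mathbb E_0\big[R_{\frac{s_0}{s_0+s_1}t}\big](1+o(1))$, where $R_\tau$ is the range up to time $\tau$ of a rate-$2d\rho$ walk. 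Granting this, Jensen's inequality applied exactly as in \eqref{asA}--\eqref{asB} gives
\begin{align*}
\log\mathbb P\big(\xi(z,s)=0\ \forall z\in Q_t\text{ whenever }\alpha(s)=1\big)\ \ge\ \log(1-p)\,\mathbb E_0\big[R_{\frac{s_0}{s_0+s_1}t}\big](1+o(1)),
\end{align*}
and on the intersection of this event with $B_t$ the integrand is zero; combining with $\log\mathbb P(B_t)=o(a_t)$ and inserting $\mathbb E_0[R_\tau]=4\sqrt{\rho\tau/\pi}(1+o(1))$ for $d=1$ resp.\ $4\rho\pi\,\tau/\log\tau\,(1+o(1))$ for $d=2$ with $\tau=\tfrac{s_0}{s_0+s_1}t$ yields the asserted lower bounds.

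The main obstacle is precisely the bound on $\mathcal E[|H_t^{Q_t,\alpha}|]$. Although the exclusion keeps evolving during the dormant gaps, one must show that only the part of $\mathcal G$ that is genuinely ``seen'' by the walk at its active times contributes, and that this part explores on the active time scale $\tfrac{s_0}{s_0+s_1}t$ rather than on $[0,t]$. This is handled by conditioning first on the (typical) switching path $\alpha$ and then analysing the backward exploration in $\mathcal G$ along the walk's active portion only, so that the hitting-probability sum $\sum_{x}\mathbb P_x(\tau_{Q_t}\le\cdot)$ of \eqref{asA} is effectively evaluated at the horizon $\tfrac{s_0}{s_0+s_1}t$. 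All remaining ingredients — the convergence $\inf_{z\in\partial Q_t}\mathbb P_z(\tau_0\le\varepsilon t)\to1$, the confinement probability $\mathbb P(B_t)$, and the replacement of the right-hand side of the analogue of \eqref{ineqh} by $|Q_t|+\mathbb E_0[R_{\frac{s_0}{s_0+s_1}t}]$ — are taken over essentially verbatim from the proof of Lemma~\ref{lowerwithout}, together with the switching large-deviation input \eqref{lemma2.4}.
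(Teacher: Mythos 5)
Your proposal is correct and follows essentially the same route as the paper: the same boxes $Q_t$, the same vacancy-during-active-times event, Jensen's inequality on the graphical representation, and the reduction of the expected size of the backward-exploration set to the range of a rate-$2d\rho$ walk run for the active time only, which is exactly where the factors $\sqrt{s_0/(s_0+s_1)}$ resp.\ $s_0/(s_0+s_1)$ arise in the paper as well. The only cosmetic difference is that you condition on a typical realization of $\alpha$ and substitute $L_t(1)\approx\frac{s_0}{s_0+s_1}t$ up front, whereas the paper keeps $L_t(1)$ random, bounds $\mathcal E[|\tilde H_t^{Q_t}|]$ by $|Q_t|+\mathbb E_0[R_{L_t(1)}]$ via a time-substitution in the hitting-time integrals, and only at the end averages over $\alpha$ using the Laplace-type lemma \eqref{lemma2.4} on the sub-$t$ scales $a_t$ — two equivalent ways of exploiting the same large-deviation input.
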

\begin{proof}
The proof follows a similar approach to that of Lemma \ref{lowerwithout}. Define $Q_t:=[-r_t,r_t]^d$, where we set $r_t=\sqrt{\frac{t}{\log(t)}}$ for $d=1$ and $r_t=\log(t)$ for $d=2$. For a switching random walk, the strategy is to constrain the random walk $X$ to stay in $Q_t$ up to time $t$ and to require the exclusion process to create a vacant region $Q_t$ around zero at time $t=0$, while maintaining this vacancy during all time intervals in which the random walk $X$ is active, i.\,e.\,, the switching component $\alpha$ takes the value $1$. In other words, when the random walk is dormant, exclusion particles may enter the vacant region, but they must leave before the random walk becomes active again. More precisely, define $A_t:=\left\{s\in[0,t]:\alpha(s)=1\right\}$ as well as
\begin{align*}
B_t:=\left\{X(s)\in Q_t\forall s\in[0,t]\right\}\qquad\text{ and }\qquad C_t:=\left\{\xi(x,s)=0\forall x\in Q_t\forall s\in A_t\right\}
\end{align*}
for a fixed realization of $\alpha$ . Then,
\begin{align*}
\mathbb{E}_0^X\mathbb{E}_\nu^\xi\left[\exp\left(\gamma\int_0^t\xi(X(s),s)\,\d s\right)\right]\geq \mathbb{P}_{(0,1)}^{(X,\alpha)}(B_t)\mathbb{P}_1^{\alpha}\otimes\mathbb{P}_\nu^\xi(C_t).
\end{align*}
Now, if $\tilde{X}$ denotes a simple symmetric random walk without switching and with jump rate $2d\kappa$, then end-point $X(t)$ equals $\tilde{X}(L_t(1))$ in distribution, such that we can write 
\begin{align*}
\mathbb{P}_{(0,1)}^{(X,\alpha)}(B_t)&=\mathbb{P}_1^{\alpha}\mathbb{P}_0^{\tilde{X}}(\tilde{X}(L_s(1))\in Q_t\forall s\in[0,t])\geq \mathbb{P}_0^{\tilde{X}}(\tilde{X}(s)\in Q_t\forall s\in[0,t])
\end{align*}
and hence
\begin{align*}
\mathbb{P}_{(0,1)}^{(X,\alpha)}(B_t)\geq \left\{\begin{array}{ll}\exp\left(\log(\beta)\log(t)\right), &d=1,\\\exp\left(\log(\beta)\frac{t}{\log(t)^2}\right), &d=2.
\end{array}\right.
\end{align*}
as in \eqref{asB}. In order to determine $\mathbb{P}_1^{\alpha}\mathbb{P}_\nu^\xi(C_t)$, we once more make use of the graphical representation of the exclusion process. Denote by
\begin{align*}
\tilde{H}_t^{Q_t}:=\left\{x\in\mathbb{Z}^d : \exists \text{ path in } \mathcal{G} \text{ from } (0,x) \text{ to } A_t\times Q_t\right\} 
\end{align*}
the set of the starting point of all paths of the exclusion process which enter the set $Q_t$ at some time point $s\in A_t$, and define $\tilde{H}_t^0$ analogously with $Q_t$ replaced by $\{0\}$. Note, that $\tilde{H}_t^{Q_t}\subseteq H_t^{Q_t}$. Then, as in the proof of Lemma \ref{lowerwithout}, we see that
\begin{align*}
\mathbb{P}_\nu^\xi(C_t)=(\mathcal{P}\otimes\mathbb{P}_\nu^\xi)(\tilde{H}_t^{Q_t}\subseteq V_0),
\end{align*}
since otherwise there would be some exclusion particle propagating into $Q_t$ at some time point $s\in A_t$. Thus,
\begin{align*}
\mathbb{P}_\nu^\xi(C_t)=\mathcal{E}[(1-p)^{|\tilde{H}_t^{Q_t}|}]\geq (1-p)^{\mathcal{E}[|\tilde{H}_t^{Q_t}|]}
\end{align*}
using Jensen's inequality again, where
\begin{align}\label{Etilde}
\mathcal{E}[|\tilde{H}_t^{Q_t}|]&=\sum_{x\in\mathbb{Z}^d}\mathcal{P}(\exists \text{ path in } \mathcal{G} \text{ from } (0,x) \text{ to } A_t\times Q_t).
\end{align}
For the fixed realization of $\alpha$, let $N_t$ denote the number of jumps of $\alpha$ till $t$ and write $s_1,s_2,\cdots,s_{N_t}$ for the jump times of $\alpha$ and $\tau_k:=s_k-s_{k-1}$ for the corresponding waiting times, such that $A_t=[0,s_1)\cup[s_2,s_3)\cup\cdots\cup[s_{N_t},t]$, where we w.l.o.g.\,assume that $N_t$ is even. We will show that
\begin{align*}
\Psi(t):=\mathcal{P}(\exists \text{ path in } \mathcal{G} \text{ from } (0,x) \text{ to } Q_t\times A_t)=\mathcal{P}(\exists \text{ path in } \mathcal{G} \text{ from } (0,x) \text{ to } Q_t\times [0,L_t(1)]).
\end{align*}
To this end, note that, if $\tau_{E}$ denotes the hitting time of a set $E$ by a simple random walk with jump rate $2d\rho$, then
\begin{align*}
\mathcal{P}(\exists \text{ path in } \mathcal{G} \text{ from } (0,x) \text{ to } Q_t\times [s_{2k},s_{2k+1}))=\mathbb{P}_x(\tau_{Q_t}\in[s_{2k},s_{2k+1}))
\end{align*}
for $k=0,\cdots,\frac{1}{2}N_t$ and hence
\begin{align*}
\Psi(t)=\int_0^{s_1}\mathbb{P}_x(\tau_{Q_t}=s)\,\d s+\int_{s_2}^{s_3}\mathbb{P}_x(\tau_{Q_t}=s)\,\d s+\cdots+\int_{s_{N_t}}^t \mathbb{P}_x(\tau_{Q_t}=s)\,\d s.
\end{align*}
Now, we substitute the integration variable in a similar manner as in the proof of \cite[Lemma 2.8]{shafigh} to observe that
\begin{align*}
\Psi(t)=\mathbb{P}_x(\tau_{Q_t}\in[0,L_t(1)])=\mathcal{P}(\exists \text{ path in } \mathcal{G} \text{ from } (0,x) \text{ to } Q_t\times [0,L_t(1)]).
\end{align*}
Combining this with \eqref{Etilde} we obtain
\begin{align*}
\mathcal{E}[|\tilde{H}_t^{Q_t}|]=\sum_{x\in\mathbb{Z}^d}\mathbb{P}_x(\tau_{Q_t}\leq L_t(1))=\mathcal{E}[|H_{L_t(1)}^{Q_t}|],
\end{align*}
such that we can apply the results and details of the proof of Lemma \ref{lowerwithout} to deduce that
\begin{align*}
\mathbb{P}_\nu^\xi(C_t)\geq (1-p)^{\mathcal{E}[|H_{L_t(1)}^{Q_t}|]}=(1-p)^{|Q_t|+\mathbb{E}_0[R_{L_t(1)}]}.
\end{align*}
and therefore
\begin{align*}
\mathbb{P}_1^{\alpha}\mathbb{P}_\nu^\xi(C_t)\geq\displaystyle \mathbb{E}_1^{\alpha}\left[\e^{\log(1-p)\left(|Q_t|+\mathbb{E}_0[R_{L_t(1)}]\right)}\right].
\end{align*}
Note that
\begin{align*}
\mathbb{E}_0[R_{L_t(1)}]=\left\{\begin{array}{ll}\displaystyle 4\sqrt{\frac{\rho}{\pi}}\sqrt{L_t(1)}(1+o(1)), &d=1,\\[12pt]\displaystyle 4\rho\pi\frac{L_t(1)}{\log(L_t(1))}(1+o(1)), &d=2,
\end{array}\right.
\end{align*}
such that $\mathbb{E}_0[R_t]$ grows on a slower scale than the large deviation scale $t$ of the local times of $\alpha$. Applying \cite[Lemma 2.4]{shafigh} as stated in \eqref{lemma2.4} to $f(t)=\sqrt{t}$ for $d=1$ and $f(t)=t/\log(t)$ in $d=2$, we obtain
\begin{align*}
\lim_{t\to\infty}\frac{1}{\sqrt{t}}\log\mathbb{P}_1^{\alpha}\mathbb{P}_\nu^\xi(C_t)\geq 4\log(1-p)\sqrt{\frac{\rho}{\pi}}\sqrt{\frac{s_0}{s_0+s_1}}
\end{align*}
for $d=1$ and
\begin{align*}
\lim_{t\to\infty}\frac{\log(t)}{t}\log\mathbb{P}_1^{\alpha}\mathbb{P}_\nu^\xi(C_t)\geq 4\log(1-p)\rho\pi\frac{s_0}{s_0+s_1}
\end{align*}
with the same argument regarding the term $|Q_t|$ as in the proof of Lemma \ref{lowerwithout}.
\end{proof}

\section{Growth with Dormancy}
This section is devoted to the case $\gamma>0$, where the exclusion particles are interpreted as catalysts. We start with the proof of the variational formula \eqref{varfor}:
\\\\\textbf{\emph{Proof of Theorem 1.3}}
\\As stated in Section 2.4, our goal is to apply the Perron-Frobenius theory for self-adjoint operators, even though the generator $Q$ of $\alpha$ is not symmetric. To address this issue, we will instead work with the symmetric version $\tilde{Q}$ defined \eqref{deftildeq}, by employing the Radon-Nikodym derivative \eqref{rndichte}. To this end, define
\begin{align}\label{V}
V(\eta,x,i):=-s_0\delta_0(i)-s_1\delta_1(i)+\gamma\delta_{(1,1)}(i,\eta(x))
\end{align}
for $(\eta,x,i,)\in\{0,1\}^{\mathbb{Z}^d}\times\mathbb{Z}^d\times\{0,1\}$. Then, 
\begin{align*}
\mathbb{E}_{\nu,(0,1)}^{(\xi,X,\alpha)}\left[\exp\left(\int_0^t\delta_{(1,1)}(\alpha(s),\xi(X(s),s))\d s\right)\right]=\e^{\sqrt{s_0s_1}t}\mathbb{E}_{\nu,(0,1)}^{(\tilde{\xi},\tilde{X},\tilde{\alpha})}\left[\exp\left(\int_0^tV(\tilde{\xi}(s),\tilde{X}(s),\tilde{\alpha}(s))\d s\right)\right].
\end{align*}
Let us start with the proof of the upper bound, which is done in a standard way. More precisley, for $Y(s):=(\tilde{\xi}(s),\tilde{X}(s),\tilde{\alpha}(s))$, we have
\begin{align*}
\mathbb{E}_{\nu,(0,1)}^{(\tilde{\xi},\tilde{X},\tilde{\alpha})}\left[\exp\left(\int_0^tV(Y(s))\d s\right)\right]=\mathbb{E}_{\nu,(0,1)}^{(\tilde{\xi},\tilde{X},\tilde{\alpha})}\left[\exp\left(\int_0^tV(Y(s))\d s\right)\mathds{1}_{\{\tilde{X}(t)\in Q_{r(t)}\}}\right]+R_t
\end{align*}
with $r(t):=t\log(t)$, $Q_{r(t)}=[-r(t),r(t)]^d$ and $R_t$ some function with $\lim_{t\to\infty}\frac{1}{t}\log R_t=-\infty$ by some standard large deviation estimates for the random walk. Moreover,
\begin{align*}
\mathbb{E}_{\nu,(0,1)}^{(\xi,\tilde{X},\tilde{\alpha})}\left[\exp\left(\int_0^tV(Y(s))\d s\right)\mathds{1}_{\{\tilde{X}(t)\in Q_{r(t)}\}}\right]&\leq \sum_{x\in Q_{r(t)}}\mathbb{E}_{\nu,(x,1)}^{(\xi,\tilde{X},\tilde{\alpha})}\left[\exp\left(\int_0^tV(Y(s))\d s\right)\mathds{1}_{\{\tilde{X}(t)\in Q_{r(t)}\}}\right]
\\&=(1+o(1))\left(\e^{(\tilde{L}+V)t}\mathds{1}_{Q_{r(t)}},\mathds{1}_{Q_{r(t)}}\right)
\\&\leq(1+o(1))\e^{\lambda t}\|Q_{r(t)}\|^2\leq (1+o(1))\e^{\lambda t}|Q_{r(t)}|,
\end{align*}
where 
\begin{align*}
\lambda:=\sup\text{Sp}(\tilde{L}+V)
\end{align*}
denotes the largest eigenvalue of the bounded and self-adjoint operator $\tilde{L}+V$. Since $|Q_{r(t)}|=(2t\log(t))^d$ grows only polynomially, we obtain
\begin{align*}
\lim_{t\to\infty}\frac{1}{t}\log\mathbb{E}_{\nu,(0,1)}^{(\tilde{\xi},\tilde{X},\tilde{\alpha})}\left[\exp\left(\int_0^tV(Y(s))\d s\right)\right]\leq \lambda.
\end{align*}
The proof of the lower bound follows the same approach as the proof of \cite[Proposition 2.2.1]{exclusion}. More precisely, let $(E_\mu)_{\mu\in\mathbb{R}}$ denote the spectral family of orthogonal projection operators associated with $\tilde{L}+V$, and let $\delta>0$. Then we can find a function $f_{\delta}\in\ell^2\left(\{0,1\}^{\mathbb{Z}^d}\times\mathbb{Z}^d\times\{0,1\}\right)$ such that $(E_\lambda-E_{\lambda-\delta})f_\delta\neq 0$. Approximating $f_\delta$ by bounded functions with finite support in the spatial component, we can w.l.o.g. assume that $0\leq f_\delta\leq \mathds{1}_{K_\delta}$ for some finite $K_\delta\subseteq\mathbb{Z}^d$. Then,
\begin{align*}
\mathbb{E}_{\nu,(0,1)}^{(\tilde{\xi},\tilde{X},\tilde{\alpha})}\left[\exp\left(\int_0^tV(Y(s))\d s\right)\right]&\geq \sum_{x\in K_\delta}\mathbb{E}_{\nu,(0,1)}^{(\tilde{\xi},\tilde{X},\tilde{\alpha})}\left[\exp\left(\int_1^tV(Y(s))\d s\right)\mathds{1}_{\{\tilde{X}(1)=x\}}\right]
\\&\geq \sum_{x\in K_\delta}\mathbb{E}_{\nu,(0,1)}^{(\tilde{\xi},\tilde{X},\tilde{\alpha})}\left[\mathbb{E}_{\eta(1),(x,\alpha(1))}^{(\xi,\tilde{X},\tilde{\alpha})}\left[\exp\left(\int_0^{t-1}V(Y(s))\d s\right)\right]\mathds{1}_{\{\tilde{X}(1)=x\}}\right]
\\&=\sum_{x\in K_\delta}\sum_{i\in\{0,1\}}p_1(x,i)\mathbb{E}_{\nu,(x,i)}^{(\tilde{\xi},\tilde{X},\tilde{\alpha})}\left[\exp\left(\int_0^{t-1}V(Y(s))\d s\right)\right]
\\&\geq \sum_{x\in K_\delta}p_1(x,1)\mathbb{E}_{\nu,(x,1)}^{(\tilde{\xi},\tilde{X},\tilde{\alpha})}\left[\exp\left(\int_0^{t-1}V(Y(s))\d s\right)\right]
\end{align*}
with $p_t(x,i):=\mathbb{P}_{(0,1)}^{(\tilde{X},\tilde{\alpha})}((\tilde{X}(t),\tilde{\alpha}(t))=(x,i))$, where we used that $\nu$ is invariant under the exclusion dynamics. Then, defining $C_\delta:=\min_{x\in K_\delta}p_1(x,1)$, we can lower-bound
\begin{align*}
\mathbb{E}_{\nu,(0,1)}^{(\tilde{\xi},\tilde{X},\tilde{\alpha})}&\left[\exp\left(\int_0^tV(Y(s))\d s\right)\right]
\\&\geq C_\delta\sum_{x\in K_\delta}\mathbb{E}_{\nu,(x,1)}^{(\xi,\tilde{X},\tilde{\alpha})}\left[\exp\left(\int_0^{t-1}V(Y(s))\d s\right)\right]
\\&\geq C_\delta\int_{\{0,1\}^{\mathbb{Z}^d}}\nu(\d\eta)\sum_{(x,i)\in\mathbb{Z}^d\times\{0,1\}}f_\delta(\eta,x,i)\mathbb{E}_{\eta,(x,i)}^{(\xi,\tilde{X},\tilde{\alpha})}\left[\exp\left(\int_0^{t-1}V(Y(s))\d s\right)f_\delta(Y(t-1))\right]
\\&\geq C_\delta\left(\e^{(\tilde{L}+V)(t-1)}f_\delta,f_\delta\right) \geq C_\delta\e^{(\lambda-\delta)(t-1)}\|(E_\lambda-E_{\lambda-\delta})f_\delta\|^2.
\end{align*}
This yields 
\begin{align*}
\liminf_{t\to\infty}\frac{1}{t}\log \mathbb{E}_{\nu,(0,1)}^{(\xi,\tilde{X},\tilde{\alpha})}&\left[\exp\left(\int_0^tV(Y(s))\d s\right)\right]\geq \lambda-\delta
\end{align*}
and hence, letting $\delta\to 0$ completes the proof of the lower bound. Moreover, the Rayleigh-Ritz formula asserts that
\begin{align*}
\sup\text{Sp}(\tilde{L}+V)=\sup_{\substack{f\in\ell^2\left(\{0,1\}^{\mathbb{Z}^d}\times\mathbb{Z}^d\times\{0,1\}\right), \\\|f\|_2=1}}\left<(\tilde{L}+V)f,f\right>.
\end{align*} 
Let us calculate the inner product. We have
\begin{align*}
\left<Vf,f\right>=\int_{\{0,1\}^{\mathbb{Z}^d}}\nu(\d\eta)\sum_{x\in\mathbb{Z}^d}(\gamma\eta(x)-s_1)f(\eta,x,1)^2-s_0f(\eta,x,0)^2,
\end{align*}
and
\begin{align*}
\left<\tilde{L}f,f\right>=-A_2(f)-A_3(f)-A_4(f),
\end{align*}
where
\begin{align*}
A_2(f):&=-\int_{\{0,1\}^{\mathbb{Z}^d}}\nu(\d\eta)\sum_{z\in\mathbb{Z}^d}\sum_{i\in\{0,1\}}\rho\sum_{x\in\mathbb{Z}^d}\sum_{\substack{y\in\mathbb{Z}^d\\x\sim y}}(f(\eta^{x,y},z,i)-f(\eta,z,i))f(\eta,z,i)
\\&=\frac{1}{2}\rho\int_{\{0,1\}^{\mathbb{Z}^d}}\nu(\d\eta)\sum_{z\in\mathbb{Z}^d}\sum_{i\in\{0,1\}}\sum_{\substack{x,y\in\mathbb{Z}^d\\x\sim y}}(f(\eta^{x,y},z,i)-f(\eta,z,i))^2,
\end{align*}
and, writing $e_j\in\mathbb{Z}^d$ for the $j$-th unit vector,
\begin{align*}
A_3(f):&=-\int_{\{0,1\}^{\mathbb{Z}^d}}\nu(\d\eta)\sum_{z\in\mathbb{Z}^d}\sum_{i\in\{0,1\}}i\kappa\sum_{y\sim z}i\kappa(f(\eta,y,i)-f(\eta,z,i))f(\eta,z,i)
\\&=-\int_{\{0,1\}^{\mathbb{Z}^d}}\nu(\d\eta)\sum_{z\in\mathbb{Z}^d}\sum_{j=1}^d\kappa(f(\eta,z+e_j,1)-f(\eta,z,1))f(\eta,z,1)
\\&\quad-\int_{\{0,1\}^{\mathbb{Z}^d}}\nu(\d\eta)\sum_{z\in\mathbb{Z}^d}\sum_{j=1}^d\kappa(f(\eta,z,1)-f(\eta,z+e_j,1))f(\eta,z+e_j,1)
\\&=\int_{\{0,1\}^{\mathbb{Z}^d}}\nu(\d\eta)\sum_{z\in\mathbb{Z}^d}\frac{1}{2}\sum_{y\in\mathbb{Z}^d,y\sim z}\kappa(f(\eta,y,1)-f(\eta,z,1))^2.
\end{align*}
Finally, we have
\begin{align*}
A_4(f):&=-\int_{\{0,1\}^{\mathbb{Z}^d}}\nu(\d\eta)\sum_{z\in\mathbb{Z}^d}\sum_{i\in\{0,1\}}\sqrt{s_0s_1}(f(\eta,z,1-i)-f(\eta,z,i))f(\eta,z,i)
\\&=\int_{\{0,1\}^{\mathbb{Z}^d}}\nu(\d\eta)\sum_{z\in\mathbb{Z}^d}2\sqrt{s_0s_1}(f(\eta,z,1)-f(\eta,z,0))^2,
\end{align*}
which finishes the proof. \hfill$\Box$

Next, we proceed to prove Theorem 1.4, addressing the recurrent case $d\in\{1,2\}$ and the transient case $d\geq 3$ separately, as each requires a distinct approach.
\\\\\textbf{\emph{Proof of Theorem 1.4(a)}}
For the upper bound, note that
\begin{align*}
\mathbb{E}_{(0,1)}^{(X,\alpha)}\mathbb{E}_\nu^\xi\left[\exp\left(\gamma\int_0^t\alpha(s)\xi(X(s),s)\,\d s\right)\right]&\leq \mathbb{E}_{(0,1)}^{(X,\alpha)}\left[\exp\left(\gamma\int_0^t\alpha(s)\,\d s\right)\right]
\\&=\mathbb{E}_1^\alpha\left[\exp\left(\gamma L_t(1)\right)\right]
\end{align*}
and hence
\begin{align*}
\lim_{t\to\infty}\frac{1}{t}\log\mathbb{E}_{(0,1)}^{(X,\alpha)}\mathbb{E}_\nu^\xi\left[\exp\left(\gamma\int_0^t\alpha(s)\xi(X(s),s)\,\d s\right)\right]\leq \sup_{a\in[0,1]}\left\{a\gamma-I(a)\right\}
\end{align*}
using Varadhan's lemma and the large deviation principle for $\alpha$. The lower bound relies on a similar idea to the proof of the lower bound in \cite[Theorem 1.3.2(i)]{exclusion} in the case without switching. Specifically, we constrain the random walk to stay within a finite set $Q$ around zero up to time $t$ and require the exclusion process to build an area full of particles in $Q$ up to time $t$. More precisely, 
\begin{align*}
\mathbb{E}_{(0,1)}^{(X,\alpha)}\mathbb{E}_\nu^\xi\left[\exp\left(\gamma\int_0^t\alpha(s)\xi(X(s),s)\,\d s\right)\right]&\geq \mathbb{E}_1^{\alpha}[\exp(\gamma L_t(1))]\mathbb{P}_{(0,1)}^{(X,\alpha)}(X(s)\in Q\forall s\in[0,t])\phi(t)
\\&\geq \mathbb{E}_1^{\alpha}[\exp(\gamma L_t(1))]\mathbb{P}_0^{\tilde{X}}(\tilde{X}(s)\in Q\forall x\in[0,t])\phi(t)
\end{align*}
for a simple symmetric random walk $\tilde{X}$ with jump rate $2d\kappa$ without switching and where we recall $\phi(t)=\mathbb{P}_\nu^\xi(\xi(0,s)=0\forall s\in[0,t])$. Now, \cite[Lemma 3.1.1]{exclusion} asserts that
\begin{align*}
\lim_{t\to\infty}\frac{1}{t}\log\phi(t)=0.
\end{align*}
Moreover, 
\begin{align*}
\lim_{t\to\infty}\frac{1}{t}\log\mathbb{P}_0^{\tilde{X}}(\tilde{X}(s)\in Q\forall x\in[0,t])=-\lambda(Q)
\end{align*}
for $\lambda$ the Dirichlet eigenvalue of $-\kappa\Delta$ on $Q$. Letting $Q\to\mathbb{Z}^d$ and therefore $\lambda\to 0$, we deduce that
\begin{align*}
\lim_{t\to\infty}\frac{1}{t}\log\mathbb{E}_{(0,1)}^{(X,\alpha)}\mathbb{E}_\nu^\xi\left[\exp\left(\gamma\int_0^t\alpha(s)\xi(X(s),s)\,\d s\right)\right]\geq \sup_{a\in[0,1]}\left\{a\gamma-I(a)\right\}
\end{align*}
using Varadhan's lemma again. Calculating the maximal value in the brackets yields the assertion. \hfill $\Box$

The following proof is based on the variational formula \eqref{varfor} and follows a similar idea as in the proof of \cite[Proposition 3.2.1]{exclusion}:
\\\\\textbf{\emph{Proof of Theorem 1.4(b)}}
\\Let $\varepsilon>0$, and $a_0,a_1\in[0,1]$ be constants to be determined later, such that $a_0+a_1=1$. Choose a function $\phi_\varepsilon:\mathbb{Z}^d\times\{0,1\}\to\mathbb{R}$ which satisfies the conditions
\begin{align}
\sum_{x\in\mathbb{Z}^d}\phi_\varepsilon(x,i)^2=a_i, \quad i\in\{0,1\},
\end{align}
and
\begin{align}
\max\left\{\sum_{i\in\{0,1\}}\sum_{\substack{x,y\\x\sim y}}(\phi_\varepsilon(x,i)-\phi_\varepsilon(y,i))^2, \sum_{x\in\mathbb{Z}^d}(\phi_\varepsilon(x,i)-\phi_\varepsilon(x,1-i))^2\right\}\leq\varepsilon^2.
\end{align}
Then, for $f_\varepsilon:\{0,1\}^{\mathbb{Z}^d}\times\mathbb{Z}^d\times\{0,1\}\to\mathbb{R}$ defined as
\begin{align*}
f_\varepsilon(\eta,x,i):=\frac{1+\varepsilon\eta(x)}{\sqrt{1+(2\varepsilon+\varepsilon^2)p}}\phi_\varepsilon(x,i),
\end{align*}
we have
\begin{align*}
\|f_\varepsilon\|&=\int_{\{0,1\}^{\mathbb{Z}^d}}\nu(\d\eta)\sum_{x\in\mathbb{Z}^d}\sum_{i\in\{0,1\}}f_\varepsilon(\eta,x,i)^2
\\&=\int_{x\in\mathbb{Z}^d}\sum_{i\in\{0,1\}}\left(p\frac{(1+\varepsilon)^2}{1+(2\varepsilon+\varepsilon^2)p}+(1-p)\frac{1}{1+(2\varepsilon+\varepsilon^2)p}\right)\phi_\varepsilon(x,i)^2=1.
\end{align*}
Hence, $f_\varepsilon$ can serve as a test function  for \eqref{varfor}. Note that
\begin{align*}
I:&=\int_{\{0,1\}^{\mathbb{Z}^d}}\nu(\d\eta)\sum_{z\in\mathbb{Z}^d}\sum_{i\in\{0,1\}}(i\gamma\nu(\d\eta)-s_i)f_\varepsilon(\eta,z,i)^2
\\&=\sum_{x\in\mathbb{Z}^d}\frac{(-s_0p(1+\varepsilon)^2-s_0(1-p))\phi_\varepsilon(x,0)^2+(p(\gamma-s_1)(1+\varepsilon^2)-s_1(1-p))\phi_\varepsilon(x,1)^2}{1+(2\varepsilon+\varepsilon^2)p}
\\&=\left(\frac{\gamma(p+(2\varepsilon+\varepsilon^2)p)}{1+(2\varepsilon+\varepsilon^2)p}-s_1\right)\sum_{x\in\mathbb{Z}^d}\phi_\varepsilon(x,1)^2-s_0\sum_{x\in\mathbb{Z}^d}\phi_\varepsilon(x,0)^2
\\&=a_1\left(p\frac{\gamma +2\varepsilon+\varepsilon^2}{1+(2\varepsilon+\varepsilon^2)p}-s_1+s_0\right)-s_0.
\end{align*}
Analogously, we calculate the other terms appearing in the right hand-side of \eqref{varfor} to see that
\begin{align*}
II:&=\int_{\{0,1\}^{\mathbb{Z}^d}}\nu(\d\eta)\sum_{z\in\mathbb{Z}^d}\sum_{i\in\{0,1\}}\frac{1}{2}\sum_{x,y\in\mathbb{Z}^d, x\sim y}\rho(f_\varepsilon(\eta^{x,y},z,i)-f_\varepsilon(\eta,z,i))^2
\\&=\frac{1}{1+(2\varepsilon+\varepsilon^2)p}\int_{\{0,1\}^{\mathbb{Z}^d}}\nu(\d\eta)\frac{1}{2}\sum_{x,y\in\mathbb{Z}^d}\sum_{i\in\{0,1\}}\rho\varepsilon^2(\eta(x)-\eta(y))^2\phi_\varepsilon(x,i)^2
\\&=\frac{\varepsilon^2p(1-p)}{1+(2\varepsilon+\varepsilon^2)p}\sum_{x\in\mathbb{Z}^d}\sum_{i\in\{0,1\}}\rho\phi_\varepsilon(x,i)^2
\\&\leq \frac{\varepsilon^2p(1-p)\rho}{1+(2\varepsilon+\varepsilon^2)p}=\frac{o(\varepsilon^2)}{1+o(\varepsilon)+o(\varepsilon^2)},\quad\varepsilon\to 0,
\end{align*}
and
\begin{align*}
III:&=\int_{\{0,1\}^{\mathbb{Z}^d}}\nu(\d\eta)\sum_{z\in\mathbb{Z}^d}\frac{1}{2}\sum_{y\in\mathbb{Z}^d,y\sim z}\kappa(f_\varepsilon(\eta,y,1)-f_\varepsilon(\eta,z,1))^2
\\&=\frac{\kappa}{2(1+(2\varepsilon+\varepsilon^2)p)}\int_{\{0,1\}^{\mathbb{Z}^d}}\nu(\d\eta)\sum_{z\in\mathbb{Z}^d}\sum_{y\in\mathbb{Z}^d,y\sim z}((1+\varepsilon\eta(y))\phi_\varepsilon(y,1)-(1+\varepsilon\eta(x))\phi_\varepsilon(z,1))^2
\\&=\frac{\kappa}{2(1+(2\varepsilon+\varepsilon^2)p)}\sum_{z\in\mathbb{Z}^d}\sum_{y\in\mathbb{Z}^d,y\sim z}(p^2(1+\varepsilon)^2+(1-p)^2)(\phi_\varepsilon(y,1)-\phi_\varepsilon(z,1))^2\\&+p(1-p)((1+\varepsilon)(\phi_\varepsilon(y,1)-\phi_\varepsilon(z,1))^2+(\phi_\varepsilon(y,1)-(1+\varepsilon)\phi_\varepsilon(z,1))^2
\\&=\kappa\sum_{z\in\mathbb{Z}^d}\sum_{y\in\mathbb{Z}^d,y\sim z}\left(\frac{1}{2}(1+(2\varepsilon+\varepsilon^2)p)(\phi_\varepsilon(y,1)-\phi_\varepsilon(z,1))^2+p(1-p)\varepsilon^2\phi_\varepsilon(y,1)\phi_\varepsilon(z,1)\right)
\\&\leq \frac{1}{2}\kappa(1+(2\varepsilon+\varepsilon^2)p)\varepsilon^2+2d\kappa\varepsilon^2p(1-p)= o(\varepsilon^2),
\end{align*}
where we used the relation
\begin{align*}
\phi_\varepsilon(y,1)\phi_\varepsilon(z,1)\leq\frac{1}{2}\phi_\varepsilon(y,1)^2+\frac{1}{2}\phi_\varepsilon(z,1)^2.
\end{align*}
Finally,
\begin{align*}
IV:&=\int_{\{0,1\}^{\mathbb{Z}^d}}\nu(\d\eta)\sum_{z\in\mathbb{Z}^d}2\sqrt{s_0s_1}(f_\varepsilon(\eta,z,1)-f_\varepsilon(\eta,z,0))^2
\\&=\frac{2\sqrt{s_0s_1}}{1+(2\varepsilon+\varepsilon^2)p}\sum_{z\in\mathbb{Z}^d}(p(1+\varepsilon)^2+(1-p))(\phi_\varepsilon(z,1)-\phi_\varepsilon(z,0))^2
\\&=2\sqrt{s_0s_1}\sum_{z\in\mathbb{Z}^d}(\phi_\varepsilon(z,1)-\phi_\varepsilon(z,0))^2\leq 2\sqrt{s_0s_1}\varepsilon^2=o(\varepsilon^2),\quad \varepsilon\to 0.
\end{align*}
Altogether we obtain
\begin{align}\label{transient}
\lim_{t\to\infty}\frac{1}{t}\log\left<U(t)\right>&\geq\sqrt{s_0s_1}+a_1\left(p\frac{\gamma +2\varepsilon+\varepsilon^2}{1+(2\varepsilon+\varepsilon^2)p}-s_1+s_0\right)-s_0+o(\varepsilon)\nonumber
\\&>\sqrt{s_0s_1}+a_1(\gamma p -s_1+s_0)-s_0, \quad \varepsilon\to 0,
\end{align}
since $p\in(0,1)$. If $s_0=s_1$, then the right hand-side of \eqref{transient} reduces to $a_1\gamma$, which can be maximized by choosing $a_1=1$. In case where $s_1>s_0$, the right hand-side of \eqref{transient} is strictly larger than $a_1(\gamma p-s_1+s_0)$. If $\gamma p-s_1+s_0\geq 0$, the optimal choice is $a_1=1$, and $a_0=0$ otherwise. For the case where $s_1<s_0$, the right hand-side of \eqref{transient} is strictly larger than $s_1-s_0+a_1(\gamma p -s_1+s_0)$. Since $\gamma p-s_1+s_0>0$, the optimal choice is $a_1=1$ in this condition. Thus, the following bounds can be established:
\begin{align*}
\lim_{t\to\infty}\frac{1}{t}\log\left<U(t)\right>>\left\{\begin{array}{ll}
\gamma p, &s_1\leq s_0,\\\gamma p -s_1+s_0, &s_1>s_0 \text{ and } \gamma p-s_1+s_0\geq 0,\\0, &s_1>s_0 \text{ and } \gamma p-s_1+s_0<0.\end{array}\right.
\end{align*}
For the upper bound, we use the representation \eqref{zeta} of the exclusion process as an environment seen from the walker. Let 
\begin{align*}
\Phi(\eta, i):=-s_0\delta_0(i)-s_1\delta_1(i)+i\gamma\eta(0) 
\end{align*}
and $\tau_y$ the usual shift operator as defined in \eqref{shift}. Recall the operator $V$ defined in \eqref{V} for which
\begin{align*}
V(\eta, x, i)=\Phi(\tau_x\eta, i)
\end{align*}
hold. Therefore, in a manner analogous to the proof of Theorem 1.3, 
\begin{align*}
\mathbb{E}_{\nu,(0,1)}^{(\tilde{\xi},\tilde{X},\tilde{\alpha})}\left[\exp\left(\int_0^tV(\xi(s),\tilde{X}(s),\tilde{\alpha}(s))\d s\right)\right]=\mathbb{E}_{\nu,1}^{\tilde{\zeta},\tilde{\alpha}}\left[\exp\left(\int_0^t\Phi((\tau_{X_s}\tilde{\xi})(\cdot,s), \tilde{\alpha}(s))\d s\right)\right]\leq\tilde{\lambda},
\end{align*}
where $\tilde{\lambda}$ is the largest eigenvalue of the self-adjoint operator $\Phi+\tilde{L}_{\text{EP}}$. The Rayleigh-Ritz formula yields
\begin{align*}
\tilde{\lambda}&=\sup_{\substack{f\in\ell^2\left(\{0,1\}^{\mathbb{Z}^d}\times\{0,1\}\right)\\\|f\|_2=1}}\left(\left<f,(\Phi+\tilde{Q}+\tilde{L}_{\text{SE}})f\right>+\int_{\{0,1\}^{\mathbb{Z}^d}}\nu(\d\eta)\sum_{y\sim 0}\kappa(f(\tau_y\eta,1)-f(\eta,1))f(\eta, 1)\right)
\\&\leq \sup_{\substack{f\in\ell^2\left(\{0,1\}^{\mathbb{Z}^d}\times\{0,1\}\right)\\\|f\|_2=1}}\left(\int_{\{0,1\}^{\mathbb{Z}^d}}\nu(\d\eta)\sum_{i\in\{0,1\}}\Phi(\eta,i)f(\eta,i)^2+\left<f,(\tilde{L}_{\text{SE}}+\tilde{Q})f\right>\right)
\\&=\mathbb{E}_1^{\tilde{\alpha}}\tilde{\mathbb{E}}_\nu\left[\exp\left(\int_0^t\Phi(\xi)(\cdot,s), \alpha(s))\d s\right)\right],
\end{align*}
where we used the fact that 
\begin{align*}
\int_{\{0,1\}^{\mathbb{Z}^d}}\nu(\d\eta)\sum_{y\sim 0}\kappa(f(\tau_y\eta,1)-f(\eta,1))f(\eta, 1)=-\int_{\{0,1\}^{\mathbb{Z}^d}}\nu(\d\eta)\sum_{j=1}^d\kappa(f(\tau_{e_j}\eta,1)-f(\eta,1))^2\leq 0
\end{align*}
in dimensions $d\geq 3$ (cf.\,\cite{upper}). At this point, the random walker $\tilde{X}$ does not appear in the formulas any more, and by applying a change of measure for $\alpha$ once again, 
\begin{align*}
\mathbb{E}_{\nu,1}^{\tilde{\zeta},\tilde{\alpha}}\left[\exp\left(\int_0^t\Phi(\xi)(\cdot,s), \alpha(s))\d s\right)\right]&=\mathbb{E}_{\nu,1}^{\tilde{\zeta},\tilde{\alpha}}\left[\exp\left(-s_0\tilde{L}_t(0)-s_1\tilde{L}_t(1)+\gamma\int_0^t\tilde{\alpha}(s)\tilde{\xi}(0,s)\d s\right)\right]
\\&=\e^{-\sqrt{s_0s_1}}\mathbb{E}_{\nu,1}^{(\xi,\alpha)}\left[\exp\left(\gamma\int_0^t\alpha(s)\xi(0,s)\d s\right)\right]
\\&\leq \e^{-\sqrt{s_0s_1}}\mathbb{E}_\nu^{\xi}\left[\exp\left(\gamma\int_0^t\xi(0,s)\d s\right)\right].
\end{align*}
Altogether and using Varadhan's lemma, we can deduce
\begin{align*}
\lim_{t\to\infty}\frac{1}{t}\log\mathbb{E}_{\nu,(0,1)}^{(\xi,X,\alpha)}\left[\exp\left(\gamma\int_0^t\alpha(s)\xi(X(s),s)\d s\right)\right]&\leq \lim_{t\to\infty}\frac{1}{t}\log\mathbb{E}_{\nu}^{\xi}\left[\exp\left(\gamma\int_0^t\xi(0,s)\d s\right)\right]
\\&=\sup_{x\in[0,1]}\{\gamma x -I_\xi(x)\}
\\&=\gamma-\inf_{x\in[0,1]}\{I_\xi(x)+\gamma(1-x)\}.
\end{align*}
Recalling $I_\xi$ defined in \eqref{Ixi}, we observe that $I_\xi(x)+\gamma(1-x)>0$ for all $x\in[0,1]$, and thus the upper bound follows. 
\subsection*{Acknowledgement}
The author would like to thank Professor Wolfgang König for his invaluable support.

\subsection*{Competing Interests}
The author has no competing interests to declare that are relevant to the content of this article.

\end{document}